\documentclass{amsart}
\usepackage{amsmath}
  \usepackage{paralist}
  \usepackage{amssymb}
 \usepackage{amsthm}
\usepackage[colorlinks=true]{hyperref}
\hypersetup{urlcolor=blue, citecolor=red}

  \textheight=8.2 true in
   \textwidth=5.0 true in
    \topmargin 30pt
     \setcounter{page}{1}

\newtheorem{theorem}{Theorem}[section]

\newtheorem{lemma}[theorem]{Lemma}

\theoremstyle{definition}

\newtheorem{remark}[theorem]{Remark}

\numberwithin{equation}{section}



\title[Existence of normalized solutions]
      {Existence of normalized solutions to Choquard equation with general mixed nonlinearities}

\author[Meiling Zhu, Xinfu Li]{}


 \keywords{normalized solutions; ground state solution; lower critical Choquard equation; upper critical exponent;
 mixed nonlinearity}

\thanks{*Corresponding author. Email Addresses: zhumeiling12365@163.com;  lxylxf@tjcu.edu.cn.}

\begin{document}
\maketitle

\centerline{\scshape Meiling Zhu$^a$,\  Xinfu Li$^{b*}$}

\medskip
{\footnotesize
\centerline{$^\mathrm{a}$College of Computer Science and
Engineering, Cangzhou Normal University,} \centerline{Cangzhou,
Hebei, 061001, P. R. China} \centerline{$^b$School of Science, Tianjin University of Commerce, Tianjin, 300134,
P. R. China}}

\bigskip

\begin{abstract}
We study the existence of normalized solutions to the following Choquard equation with $F$  being a Berestycki-Lions type function
\begin{equation*}
\begin{cases}
-\Delta u+\lambda u=
(I_{\alpha}\ast F(u))f(u),\quad \text{in}\  \mathbb{R}^N, \\
\int_{\mathbb{R}^N}|u|^2dx=\rho^2,
\end{cases}
\end{equation*}
where $N\geq 3$, $\rho>0$ is assigned,  $\alpha\in (0,N)$, $I_{\alpha}$ is the
Riesz potential, and $\lambda\in \mathbb{R}$ is an unknown parameter that appears as a
Lagrange multiplier. Here, the general nonlinearity $F$ contains the $L^2$-subcritical and  $L^2$-supercritical mixed case,  the Hardy-Littlewood-Sobolev lower critical and upper critical cases.\\
\textbf{2020 Mathematics Subject Classification}:  35B33; 35J20; 35J60.
\end{abstract}

\section{Introduction and main results}

\setcounter{section}{1}
\setcounter{equation}{0}

We consider the following Choquard equation with an $L^2$-constraint
\begin{equation}\label{e1.3}
\begin{cases}
-\Delta u+\lambda u=
(I_{\alpha}\ast F(u))f(u),\quad \text{in}\  \mathbb{R}^N, \\
\int_{\mathbb{R}^N}|u|^2dx=\rho^2,
\end{cases}
\end{equation}
where $N\geq 3$, $\rho>0$ is assigned, $\lambda\in \mathbb{R}$ is an unknown parameter that appears as a
Lagrange multiplier,  and $I_{\alpha}$ is the
Riesz potential of order $\alpha\in (0,N)$ defined for every $x\in \mathbb{R}^N \setminus
\{0\}$ by
\begin{equation*}
I_{\alpha}(x):=\frac{A_\alpha(N)}{|x|^{N-\alpha}},\
A_\alpha(N):=\frac{\Gamma(\frac{N-\alpha}{2})}{\Gamma(\frac{\alpha}{2})\pi^{N/2}2^\alpha}
\end{equation*}
with $\Gamma$ denoting the Gamma function (see \cite{Riesz1949AM},
P.19). Here the nonlinearity $F(t)=b|t|^{\frac{N+\alpha}{N}}+G(t)$ with $b=0$ or $1$, $g(t)=G'(t)$, $f(t)=F'(t)$ and $G\in C^1(\mathbb{R},\mathbb{R})$ satisfies the following conditions $(G_1)$-$(G_3)$:\\
($G_1$)  $|g(t)|\lesssim |t|^{\frac{\alpha}{N}}+|t|^{\frac{2+\alpha}{N-2}},\ \forall\ t\in\mathbb{R}.$\\
($G_2$) \begin{equation*}\left\{\begin{array}{ll}
\lim_{t\to 0}\frac{G(t)}{|t|^{1+\frac{2+\alpha}{N}}}=+\infty,& \text{if}\ b=0,\\
\lim_{t\to 0}\frac{G(t)}{|t|^{1+\frac{4+\alpha}{N}}}=+\infty,& \text{if}\ b=1.
\end{array}
\right.
\end{equation*}
($G_3$) $\lim_{t\to 0}\frac{G(t)}{|t|^{\frac{N+\alpha}{N}}}=0$.

The equation (\ref{e1.3}) has several physical origins.
When $N = 3$, $\alpha = 2$ and $F(u)=|u|^2$,
it was investigated by Pekar in \cite{Pekar 1954} to study
the quantum theory of a polaron at rest. In \cite{Lieb 1977},
Choquard applied it as an approximation to Hartree-Fock theory of
one component plasma. It also arises in multiple particles systems
\cite{Gross 1996} and quantum mechanics \cite{Penrose 1996}. It is also known as the Schr\"{o}dinger-Newton equation or the Hartree equation. For fixed $\lambda>0$, the Choquard equation (\ref{e1.3}) has received many attentions in recent years, see \cite{Moroz-Van Schaftingen 2015} and the references therein.
However, physicists are more preferable to find normalized solutions of problem (\ref{e1.3}), i.e., the $L^2$-norm of solutions is given in advance. It then turns out to be a constrained problem restricted on a prescribed sphere in $L^2(\mathbb{R}^N)$. Consequently, a normalized solution of the Choquard equation always means a couple $(u, \lambda)$ which satisfies equation (\ref{e1.3}) with $\lambda$ appearing as a Lagrange multiplier.

When studying Choquard equation with power type nonlinearity in $H^1(\mathbb{R}^N)$
\begin{equation}\label{e1.12}
-\Delta u+\lambda u=(I_\alpha\ast|u|^p)|u|^{p-2}u,\ x\in
\mathbb{R}^{N},
\end{equation}
it usually requires $p\in [\underline{p},\bar{p}]$ by the Hardy-Littlewood-Sobolev inequality, where
$\bar{p}:=\frac{N+\alpha}{N-2}$ and $\underline{p}:=\frac{N+\alpha}{N}$ are the Hardy-Littlewood-Sobolev upper critical exponent and lower critical exponent, respectively. To consider normalized solutions of (\ref{e1.12}), the $L^2$-critical exponent $p^*:=1+\frac{2+\alpha}{N}$ appears as a special role. Since for $ p<p^*$, the functional is bounded below on the prescribed sphere in $L^2(\mathbb{R}^N)$, while for $p>p^*$, the functional is no longer bounded on the sphere, see \cite{{Cazenave-Lions
1982},{Luo 2019},{Ye 2016}} for more details.
Recently, the authors in \cite{{Cao-Jia-Luo-JDE},{Ding-Wang-2210},{Jia-Luo-CV},{Long-Li-Zhu},{Wang-Ma-Liu}} studied the normalized solutions to the Choquard equation
\begin{equation*}
-\Delta u+\lambda u=(I_\alpha\ast
|u|^{p})|u|^{p-2}u+ \mu(I_\beta\ast
|u|^{q})|u|^{q-2}u,\
x\in \mathbb{R}^N
\end{equation*}
with $(p,q)$ being at different kinds of intervals, and \cite{{Li2022},{Li21},{Li-Bao-Tang-23},{Yao-Chen22}}  considered
\begin{equation}\label{e11.3}
-\Delta u+\lambda u=
(I_{\alpha}\ast|u|^{p})|u|^{p-2}u+\mu
|u|^{q-2}u,\quad x\in\mathbb{R}^N
\end{equation}
with mixed nonlinearities.

As to the Choquard equation with general nonlinearity, for $\lambda>0$ being fixed, Moroz and Van Schaftingen \cite{Moroz-Van Schaftingen 2015} considered
 \begin{equation}\label{e2.1}
-\Delta u+\lambda u=
(I_{\alpha}\ast F(u))f(u),\quad x\in\mathbb{R}^N
\end{equation}
when $F$ is a Berestycki-Lions type function under the following general assumptions:\\
(F1) $F\in C^1(\mathbb{R}, \mathbb{R})$;\\
(F2) there exists $C>0$ such that for every $t\in \mathbb{R}$,
\begin{equation*}
|tf(t)|\leq C(|t|^{\frac{N+\alpha}{N}}+|t|^{\frac{N+\alpha}{N-2}});
\end{equation*}
(F3)\ $\lim_{t\to0}\frac{F(t)}{|t|^{\frac{N+\alpha}{N}}}=0,\ \lim_{|t|\to+\infty}\frac{F(t)}{|t|^{\frac{N+\alpha}{N-2}}}=0;$\\
(F4) there exists $t_0\in \mathbb{R}\backslash\{0\}$ such that $F(t_0)\neq0$.

Concerning the constrained problem (\ref{e1.3}) with general nonlinearity,
\cite{{Cingolani-Gallo-Tanaka-CV-22},{Cingolani-Tanaka21}} considered the $L^2$-subcritical case, and \cite{{Bartsch-Liu-Liu_2020},{Li-Ye_JMP_2014},{Xia-Zhang-JDE-2023},{Xu-Ma-2022},{Yuan-Chen-Tang_2020}} considered the $L^2$-supercritical case.
Precisely, Cingolani and Tanaka \cite{Cingolani-Tanaka21} obtained a normalized solution to (\ref{e1.3}) assuming that $F$ satisfies (F1), (F4) and it is $L^2$-subcritical, namely\\
(F5) there exists $C>0$ such that for every $t\in \mathbb{R}$,
\begin{equation*}
|tf(t)|\leq C(|t|^{\frac{N+\alpha}{N}}+|t|^{\frac{N+\alpha+2}{N}});
\end{equation*}
(F6)\ $\lim_{t\to0}\frac{F(t)}{|t|^{\frac{N+\alpha}{N}}}=0,\ \lim_{|t|\to+\infty}\frac{F(t)}{|t|^{\frac{N+\alpha+2}{N}}}=0.$\\
Xu and Ma \cite{Xu-Ma-2022} obtained a normalized solution to (\ref{e1.3}) assuming that $F$ satisfies (F1) and it is $L^2$-supercritical, namely\\
(F7)\ $\lim_{t\to0}\frac{f(t)}{|t|^{\frac{\alpha+2}{N}}}=0,\ \lim_{|t|\to+\infty}\frac{F(t)}{|t|^{\frac{N+\alpha+2}{N}}}=+\infty;$\\
(F8)\ $\lim_{|t|\to+\infty}\frac{F(t)}{|t|^{\frac{N+\alpha}{N-2}}}=0;$\\
(F9)\ $f(t)t<\frac{N+\alpha}{N-2}F(t)$ for all $t\in\mathbb{R}\backslash\{0\}$ and the map $t\mapsto \tilde{F}(t)/|t|^{1+\frac{\alpha+2}{N}}$ is
strictly decreasing on $(-\infty, 0)$ and strictly increasing on $(0,+\infty)$, where $\tilde{F}(t):=f(t)t-\frac{N+\alpha}{N}F(t)$. \\
However, there is no result considering the $L^2$-subcritical and  $L^2$-supercritical mixed case. In this paper, we will settle this open problem. Meanwhile, we handle the Hardy-Littlewood-Sobolev upper critical exponent
and lower critical exponent simultaneously.

Our first result is the following.

\begin{theorem}\label{thm1.1}
Let $N\geq 3$, $\alpha\in (0,N)$, $\rho>0$, and  $G$ satisfy ($G_1$)-($G_3$). Then there exits $\rho_0>0$ such that if $\rho<\rho_0$, there exist $u\in H^1(\mathbb{R}^N)$ and $\lambda_u>0$ such that $(u,\lambda_u)$ is a solution to  (\ref{e1.3}).
\end{theorem}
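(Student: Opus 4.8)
The plan is to obtain $(u,\lambda_u)$ as a constrained minimizer. Work with the energy functional
\begin{equation*}
E(u)=\frac12\int_{\mathbb{R}^N}|\nabla u|^2\,dx-\frac12\int_{\mathbb{R}^N}\bigl(I_\alpha\ast F(u)\bigr)F(u)\,dx
\end{equation*}
on the constraint $S_\rho:=\{u\in H^1(\mathbb{R}^N):\int_{\mathbb{R}^N}|u|^2\,dx=\rho^2\}$, and look for a minimizer in a suitable ball inside $S_\rho$. The first step is a family of a priori bounds: integrating $(G_1)$ gives $|F(t)|\lesssim|t|^{\underline p}+|t|^{\bar p}$ with $\underline p=\frac{N+\alpha}{N}$, $\bar p=\frac{N+\alpha}{N-2}$, and combining the Hardy--Littlewood--Sobolev inequality with the Sobolev inequality one obtains, on $S_\rho$,
\begin{equation*}
\int_{\mathbb{R}^N}\bigl(I_\alpha\ast F(u)\bigr)F(u)\,dx\le C_1\rho^{2\underline p}+C_2\rho^{\underline p}\|\nabla u\|_2^{\bar p}+C_3\|\nabla u\|_2^{2\bar p}.
\end{equation*}
The structural point is that the only term invariant under the $L^2$-preserving dilation $t\star u:=t^{N/2}u(t\,\cdot)$ is the lower-critical one, whose weight on $S_\rho$ is bounded by the sharp Hardy--Littlewood--Sobolev constant times $\rho^{2\underline p}$, while the remaining contributions are of strictly higher order in $\|\nabla u\|_2$ than the quadratic term, uniformly for $\rho$ small. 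Hence $\varphi(s):=\tfrac12 s^2-C_1\rho^{2\underline p}-C_2\rho^{\underline p}s^{\bar p}-C_3 s^{2\bar p}$ satisfies $\varphi(0)<0$ and has, for $\rho$ small, a strictly positive maximum at some $R=R(\rho)$ bounded away from $0$, and $E(u)\ge\varphi(\|\nabla u\|_2)$.

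Put $\mathcal{B}_R:=\{u\in S_\rho:\|\nabla u\|_2\le R\}$. By the above, $E(u)>0$ whenever $u\in S_\rho$ and $\|\nabla u\|_2=R$. To see that $E$ goes below $0$ in the interior, I would evaluate $E$ along $t\star u_0$, with $u_0\in S_\rho$ fixed and $t\to0^+$: when $b=0$, condition $(G_2)$ forces $\int(I_\alpha\ast G(t\star u_0))G(t\star u_0)\,dx$ to dominate $\tfrac{t^2}{2}\|\nabla u_0\|_2^2$; when $b=1$ the lower-critical term of $F$ already supplies a negative dilation-invariant constant, while $(G_3)$ controls the rest for $t$ small. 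In either case $E(t\star u_0)<0$ and $t\star u_0\in\mathcal{B}_R$ for $t$ small, so
\begin{equation*}
m_\rho:=\inf_{\mathcal{B}_R}E<0<\inf_{\{u\in S_\rho:\ \|\nabla u\|_2=R\}}E.
\end{equation*}
Consequently every minimizing sequence for $m_\rho$ eventually lies in the open region $\{\|\nabla u\|_2<R\}$, and by Ekeland's variational principle it may be replaced by a Palais--Smale sequence $(u_n)\subset\mathcal{B}_R$ for $E|_{S_\rho}$ at level $m_\rho$, which is bounded in $H^1(\mathbb{R}^N)$.

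The core of the argument is the compactness of $(u_n)$. Up to a subsequence and a translation, $u_n\rightharpoonup u$ in $H^1(\mathbb{R}^N)$ and $u_n\to u$ in $L^q_{\mathrm{loc}}$ for $q\in[2,\tfrac{2N}{N-2})$, so the only obstructions to strong convergence live at the two Hardy--Littlewood--Sobolev endpoints: concentration carried by the upper-critical exponent $\bar p$ (i.e.\ in $L^{2N/(N-2)}$) and loss of mass at infinity carried by the lower-critical exponent $\underline p$ (i.e.\ in $L^2$). Concentration is ruled out because creating a critical Choquard bubble costs a fixed positive amount of energy, incompatible with $m_\rho<0$ and $\|\nabla u_n\|_2\le R$. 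Vanishing and dichotomy are ruled out by a strict subadditivity inequality for $\rho'\mapsto m_{\rho'}$, schematically $m_\rho<m_{\rho_1}+m_{\rho_2}$ for $\rho_1^2+\rho_2^2=\rho^2$ with $\rho_2>0$ (replacing $m_{\rho_2}$ by the infimum of the limit lower-critical Choquard problem when $b=1$); it is precisely here that $\rho<\rho_0$ and the strict negativity of $m_\rho$ are used. Hence $u_n\to u$ strongly in $H^1(\mathbb{R}^N)$, $u\in\mathcal{B}_R$, $E(u)=m_\rho<0$, and in particular $\|\nabla u\|_2<R$; thus $u$ is an interior minimizer and the Lagrange multiplier rule yields $\lambda_u\in\mathbb{R}$ with $-\Delta u+\lambda_u u=(I_\alpha\ast F(u))f(u)$ in $\mathbb{R}^N$.

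Finally, $\lambda_u>0$ follows from the Pohozaev identity associated with $(\ref{e1.3})$,
\begin{equation*}
\frac{N-2}{2}\|\nabla u\|_2^2+\frac{N}{2}\lambda_u\rho^2=\frac{N+\alpha}{2}\int_{\mathbb{R}^N}\bigl(I_\alpha\ast F(u)\bigr)F(u)\,dx,
\end{equation*}
combined with $E(u)=m_\rho$, which gives $\int_{\mathbb{R}^N}(I_\alpha\ast F(u))F(u)\,dx=\|\nabla u\|_2^2-2m_\rho$; eliminating the nonlinear integral yields
\begin{equation*}
\lambda_u\rho^2=\frac{\alpha+2}{N}\|\nabla u\|_2^2-\frac{2(N+\alpha)}{N}\,m_\rho>0.
\end{equation*}
The step I expect to be the main obstacle is the compactness analysis of the minimizing sequence: the two competing Hardy--Littlewood--Sobolev critical exponents together with the $L^2$-subcritical/supercritical mixing encoded in $G$ must be handled simultaneously, and establishing the strict subadditivity that excludes mass escaping to infinity is the delicate point where the smallness assumption $\rho<\rho_0$ genuinely enters.
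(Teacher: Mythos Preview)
Your overall strategy---local minimization in a gradient ball on the mass constraint, compactness via non-vanishing plus subadditivity, and Pohozaev for the sign of $\lambda$---matches the paper's. The Pohozaev computation at the end is correct. Two technical choices differ but are harmless: the paper minimizes over $\mathcal{D}_\rho=\{\|u\|_2\le\rho\}$ rather than $\mathcal{S}_\rho$ (and then argues a posteriori, via strict monotonicity of $a\mapsto m_{R_0}(a)$, that the minimizer has full mass), and it chooses the radius $R_0$ as the smaller root of an auxiliary function $h(\rho,\cdot)$ rather than the point where your $\varphi$ is positive.

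There is, however, a genuine gap in the case $b=1$. Your test-function step only claims $E(t\star u_0)<0$, invoking the dilation-invariant lower-critical term and $(G_3)$; this yields merely $m_\rho<0$. But to exclude vanishing when $b=1$ you need the \emph{sharp} strict inequality
\[
m_\rho<-\tfrac12\,S_2^{-\frac{N+\alpha}{N}}\rho^{\frac{2(N+\alpha)}{N}},
\]
because if $u_n\to 0$ in $L^q$ for $q\in(2,2^*)$ the only surviving nonlocal contribution is the lower-critical one, bounded by $S_2^{-\frac{N+\alpha}{N}}\rho^{\frac{2(N+\alpha)}{N}}$, and in your gradient ball the kinetic term dominates the upper-critical remainder; so $\liminf E(u_n)\ge -\tfrac12 S_2^{-\frac{N+\alpha}{N}}\rho^{\frac{2(N+\alpha)}{N}}$. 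Getting strictly below this threshold requires two ingredients you omit: (i) choosing $u_0$ to be (a rescaled) \emph{extremizer} of $S_2$, so that the lower-critical term exactly attains the sharp value, and (ii) using $(G_2)$ (not $(G_3)$), i.e.\ $G(t)/|t|^{1+\frac{4+\alpha}{N}}\to+\infty$ as $t\to0$, to make the cross term $\int(I_\alpha\ast|u_0|^{\frac{N+\alpha}{N}})G(\tau^{N/2}u_0)\,dx$ dominate the kinetic part as $\tau\to0^+$ and push the energy strictly below the threshold. This is exactly the content of the paper's Lemma~3.3, and the paper flags it as one of the new difficulties compared with the Schr\"odinger analogue. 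Your parenthetical remark about ``replacing $m_{\rho_2}$ by the infimum of the limit lower-critical Choquard problem'' shows you sense the issue, but the strict inequality that makes that comparison bite is not established by your argument.
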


\begin{remark}\label{rmk1.1}
(1) The expression of $\rho_0$ in Theorem \ref{thm1.1}  can be written explicitly, see (\ref{e3.1}).\\
(2) The solution $u$ obtained in Theorem \ref{thm1.1} is a local minimizer, see Theorem \ref{thm3.1}.
\end{remark}

\begin{remark}\label{rmk1.3}
(1) Theorem \ref{thm1.1} is also right, by substituting $F$ with $-F$.\\
(2) In  view of  the conditions in \cite{{Cingolani-Tanaka21},{Moroz-Van Schaftingen 2015},{Xu-Ma-2022}}, we see that the conditions ($G_1$) and ($G_3$)  are very weak. The condition ($G_2$) is used in Lemma \ref{lem3.3} to obtain the upper bound of the functional.  ($G_2$) is also used in Cingolani, Gallo and Tanaka \cite{Cingolani-Gallo-Tanaka-CV-22} to obtain the multiplicity of normalized  solutions to (\ref{e1.3}) in the $L^2$-subcritical case, see (\cite{Cingolani-Gallo-Tanaka-CV-22}, Remark 4).\\
(3) It is obvious that the nonlinearities in  this paper contain the $L^2$-subcritical and  $L^2$-supercritical mixed case,  the Hardy-Littlewood-Sobolev lower critical
and upper  critical cases.
\end{remark}

It makes sense to ask whether the local minimizer found in Theorem \ref{thm1.1} is a ground state solution.
This is indeed the case under the following additional assumption $(G_4)$:\\
$(G_4)$ for every $u\in \mathcal{S}_\rho$, the function $(0,+\infty)\ni\tau\mapsto \varphi(\tau):=E(u_\tau)\in\mathbb{R}$ has at most one local maximum point $\tau_{u}$ and $\varphi'(\tau)<0$ for $\tau\in (\tau_{u},+\infty)$, where $$u_{\tau}(x):=\tau^{\frac{N}{2}}u(\tau x)\  \text{for}\  \tau>0,$$ $$\mathcal{S}_{\rho}:=\left\{u\in
H^1(\mathbb{R}^N):\|u\|_2=\rho\right\},$$
and  $E:H^1(\mathbb{R}^N)\to \mathbb{R}$ is the energy functional associated with (\ref{e1.3}) given by
\begin{equation}\label{e1.10}
\begin{split}
E(u):&=\frac{1}{2}\int_{\mathbb{R}^N}|\nabla u|^2dx-\frac{1}{2}\int_{\mathbb{R}^N}(I_\alpha\ast(F(u)))F(u)dx\\
&=\frac{1}{2}\int_{\mathbb{R}^N}|\nabla u|^2dx-\frac{1}{2}\int_{\mathbb{R}^N}[I_\alpha\ast(b|u|^{\frac{N+\alpha}{N}}+G(u))][b|u|^{\frac{N+\alpha}{N}}+G(u)]dx.
\end{split}
\end{equation}

\begin{theorem}\label{thm1.2}
Let the assumptions in Theorem  \ref{thm1.1}  hold and further assume $(G_4)$ holds. Then the  solution obtained  in  Theorem  \ref{thm1.1}  is a ground state solution to (\ref{e1.3}).  A ground state $v$ of (\ref{e1.3}) is defined as
\begin{equation*}
E|_{\mathcal{S}_{\rho}}'(v)=0\ \text{and}\ E(v)=\inf\{E(w):\ w\in \mathcal{S}_{\rho},\
E|_{\mathcal{S}_{\rho}}'(w)=0\}.
\end{equation*}
\end{theorem}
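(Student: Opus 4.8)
The plan is to compare the local minimizer $u$ from Theorem \ref{thm1.1} with an arbitrary constrained critical point through the $L^2$-preserving scaling $\tau\mapsto u_\tau$. First I would recall the structural output of Theorem \ref{thm3.1}: for $\rho<\rho_0$ there is a radius $R_0=R_0(\rho)>0$ such that, writing $\mathcal{A}:=\{w\in\mathcal{S}_\rho:\|\nabla w\|_2<R_0\}$,
\[
m(\rho):=\inf_{w\in\mathcal{A}}E(w)=E(u)<0,\qquad \|\nabla u\|_2<R_0,\qquad E|_{\mathcal{S}_\rho}'(u)=0 .
\]
Since $u$ is already a constrained critical point, the theorem follows once I show $E(v)\ge m(\rho)$ for every $v\in\mathcal{S}_\rho$ with $E|_{\mathcal{S}_\rho}'(v)=0$. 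For such a $v$ I look at $\varphi_v(\tau):=E(v_\tau)$ for $\tau>0$, i.e. the function $\varphi$ of $(G_4)$ with $u$ replaced by $v$. As $\|v_\tau\|_2=\rho$ for all $\tau$, the path $\tau\mapsto v_\tau$ lies in $\mathcal{S}_\rho$ and $\partial_\tau v_\tau|_{\tau=1}=\tfrac N2 v+x\cdot\nabla v$ is tangent to $\mathcal{S}_\rho$ at $v$; hence $E|_{\mathcal{S}_\rho}'(v)=0$ (a Lagrange-multiplier relation $E'(v)=\lambda v$) forces $\varphi_v'(1)=\langle E'(v),\partial_\tau v_\tau|_{\tau=1}\rangle=0$, which is exactly the Pohozaev-type identity
\[
\|\nabla v\|_2^2+\frac{N+\alpha}{2}\int_{\mathbb{R}^N}(I_\alpha\ast F(v))F(v)\,dx=\frac N2\int_{\mathbb{R}^N}(I_\alpha\ast F(v))f(v)v\,dx
\]
satisfied by any solution of \eqref{e1.3} (one derives it from the equation tested against $v$ together with the standard Choquard Pohozaev identity, eliminating $\lambda\|v\|_2^2$).

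Next I would pin down the shape of $\varphi_v$. A change of variables $x\mapsto x/\tau$ turns the lower-critical part $b|t|^{(N+\alpha)/N}$ of $F$ into a scale-invariant term, and using $(G_3)$ (together with $v\in L^\infty$, by standard regularity for \eqref{e1.3}) to absorb the $G$-contributions one obtains $\varphi_v(\tau)\to-\tfrac{b}{2}\int_{\mathbb{R}^N}(I_\alpha\ast|v|^{(N+\alpha)/N})|v|^{(N+\alpha)/N}\,dx\le0$ as $\tau\to0^+$, while by $(G_4)$, $\varphi_v$ has at most one local maximum $\tau_v$ and is strictly decreasing on $(\tau_v,+\infty)$. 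Since $\varphi_v'(1)=0$ and $\varphi_v$ is eventually strictly decreasing, such a $\tau_v$ must exist and $\tau_v\ge1$. Reading off which sign patterns of $\varphi_v'$ are compatible with ``at most one local maximum'' and with the $\tau\to0^+$ limit, the critical set of $\varphi_v$ reduces to $\tau_v$, preceded by at most one local minimum $\tau_0\le\tau_v$, with $\varphi_v$ strictly decreasing on $(0,\tau_0)$ and nondecreasing on $[\tau_0,\tau_v]$ (and strictly increasing on $(0,\tau_v]$ when $\tau_0$ is absent). Together with $\varphi_v'(1)=0$ this leaves only the two alternatives $1=\tau_v$ or $1=\tau_0$.

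In the case $1=\tau_v$: for every small $\tau\in(0,1]$ one has $\|\nabla v_\tau\|_2=\tau\|\nabla v\|_2<R_0$, so $v_\tau\in\mathcal{A}$, and monotonicity of $\varphi_v$ on $[\tau_0,1]$ gives $E(v)=\varphi_v(1)\ge\varphi_v(\tau)=E(v_\tau)\ge m(\rho)$ (if $\tau_0$ is present one may equivalently take $\tau=\tau_0$ once one knows $v_{\tau_0}\in\mathcal{A}$). In the case $1=\tau_0$, the point $v=v_1$ is itself a place where its own fiber attains a local minimum, so — again using $v\in\mathcal{A}$ — $E(v)=\varphi_v(1)\ge m(\rho)$. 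Either way $E(v)\ge m(\rho)=E(u)$, hence $u$ attains $\inf\{E(w):w\in\mathcal{S}_\rho,\ E|_{\mathcal{S}_\rho}'(w)=0\}$ and is a ground state. I expect the obstacle to be twofold: (i) upgrading the one-sided information in $(G_4)$ to the full description of the critical set of $\varphi_v$ — in particular excluding that $\tau=1$ is an interior critical point strictly between $\tau_0$ and $\tau_v$ — for which the behaviour of $\varphi_v$ at $0^+$ and at $+\infty$ (resting on $(G_1)$--$(G_3)$) is the needed input; and (ii) verifying the inclusion $v_{\tau_0}\in\mathcal{A}$ for every fiber minimum, i.e. that the ``stable'' part of the Pohozaev set lies inside $\mathcal{A}$, which is precisely what the thresholds $\rho_0$ and $R_0$ of Theorem \ref{thm3.1} (see \eqref{e3.1}) are designed to guarantee.
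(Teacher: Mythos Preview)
Your setup---the fiber map $\varphi_v(\tau)=E(v_\tau)$, the identity $\varphi_v'(1)=0$ for any constrained critical point, and the appeal to $(G_4)$---is exactly the paper's starting point. The divergence is that you try to prove $E(v)\ge m_{R_0}(\rho)$ \emph{directly} for every critical point $v$ by classifying whether $\tau=1$ is the local maximum or a local minimum of $\varphi_v$, whereas the paper argues by contradiction. Your two flagged obstacles are genuine gaps, and the paper's route shows why.

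First, your case ``$1=\tau_0$'' simply asserts $v\in\mathcal A$, and your case ``$1=\tau_v$'' needs $v_{\tau_0}\in\mathcal A$ (or monotonicity of $\varphi_v$ all the way down to a $\tau$ with $v_\tau\in\mathcal A$). Neither follows from what you wrote: $(G_4)$ controls $\varphi_v$ only \emph{after} the local maximum, so your structural claim ``the critical set reduces to $\tau_v$ preceded by at most one $\tau_0$'' is unproved, and for an arbitrary critical point there is no a priori bound $\|\nabla v\|_2<R_0$. Second, your use of $(G_3)$ to compute $\lim_{\tau\to0^+}\varphi_v(\tau)$ is not the right input: what is actually needed (and what the paper uses, via $(G_2)$ as in Lemma~\ref{lem3.3}) is that $\varphi_v(\tau)$ lies \emph{strictly below} $-\tfrac{b^2}{2}\int(I_\alpha*|v|^{(N+\alpha)/N})|v|^{(N+\alpha)/N}$ for small $\tau$, not merely that it converges to this value.

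The paper closes both gaps at once by the quantitative lower bound of Lemma~\ref{lem3.1}. Assuming, for contradiction, a critical point $u$ with $E(u)<m_{R_0}(\rho)$, one first gets $\|\nabla u\|_2>R_1$: indeed $\|\nabla u\|_2<R_0$ would contradict the definition of $m_{R_0}(\rho)$, while $R_0\le\|\nabla u\|_2\le R_1$ would give, by Lemma~\ref{lem3.1} and Lemma~\ref{lem3.2}(g2), $E(u)\ge-\tfrac12 b^2 S_2^{-(N+\alpha)/N}\rho^{2(N+\alpha)/N}>m_{R_0}(\rho)$. Then on the interval $\tau\in(R_0/\|\nabla u\|_2,\,R_1/\|\nabla u\|_2)\subset(0,1)$ the same lower bound (now with the exact convolution term rather than its $S_2$-estimate) yields
\[
\varphi(\tau)>-\tfrac{b^2}{2}\int_{\mathbb{R}^N}(I_\alpha*|u|^{(N+\alpha)/N})|u|^{(N+\alpha)/N}\,dx,
\]
while $(G_2)$ forces $\varphi(\tau)$ strictly below this same threshold for $\tau\ll1$, and $\varphi(1)=E(u)<m_{R_0}(\rho)$ is below it as well. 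Hence $\varphi$ has a local maximum at some $\tau_u\in(0,1)$, and $(G_4)$ gives $\varphi'(1)<0$, contradicting $\varphi'(1)=0$. This is the concrete mechanism behind your obstacle~(ii); it does not come from an abstract ``the thresholds are designed to guarantee it'' but from the explicit function $h(\rho,\cdot)$ of Lemma~\ref{lem3.1}.
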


\begin{remark}\label{rmk1.4}
An example that satisfies $(G_4)$ is $F(t)=\nu |t|^p+\mu |t|^q$ with $\nu,\mu\in \mathbb{R}$, $\frac{N+\alpha}{N}<p<1+\frac{2+\alpha}{N}<q\leq \frac{N+\alpha}{N-2}$, and $\frac{N}{2}(p+q)-N-\alpha=2$.
\end{remark}

This paper is motivated by Bieganowski, d'Avenia and Schino \cite{Bieganowski-dAvenia-Schino 2024} where the Schr\"{o}dinger equation
\begin{equation}\label{e2.2}
\begin{cases}
-\Delta u+\lambda u=f(u),\quad \text{in}\  \mathbb{R}^N, \\
\int_{\mathbb{R}^N}|u|^2dx=\rho^2
\end{cases}
\end{equation}
is considered under the following conditions:\\
(f1) $f$ is continuous and $f(t)\lesssim |t|+|t|^{2^*-1}$;\\
(f2) $\lim_{t\to 0}\frac{F(t)}{t^2}=0$;\\
(f3) $\lim_{t\to 0}\frac{F(t)}{t^{2+4/N}}=+\infty$.\\
Following the arguments there, for $a,R >0$, we define
\begin{equation*}
\mathcal{D}_{a}:=\left\{u\in
H^1(\mathbb{R}^N):\|u\|_2\leq a\right\}\ \text{and}\ \mathcal{U}_{R}(a):=\left\{u\in
\mathcal{D}_{a}:\|\nabla u\|_2<R\right\},
\end{equation*}
and consider the minimizing problem
\begin{equation*}
m_{R}(a):=\inf_{u\in \mathcal{U}_{R}(a)} E(u).
\end{equation*}
By using the concentration compactness principle and the geometric characteristics of the functional, we first show that $m_{R_0}(\rho)$ is attained (see Lemma \ref{lem3.2} for the definition of $R_0$). Then using some kind of `monotonicity', we can show that such a minimizer must be in $\mathcal{S}_\rho$ and be a normalized solution to (\ref{e1.3}). Finally, using the  geometric characteristics of $E(u_\tau)$,  we show that  $m_{R_0}(\rho)$ equals to the ground state energy.

Compared with \cite{Bieganowski-dAvenia-Schino 2024},  there are some differences and difficulties in our proofs:

(1) As to the simplest case $b=0$, for the interaction of $I_{\alpha}\ast F(u)$ and $F(u)$, we make some skill to obtain the lower bound of $E(u)$ (Lemma \ref{lem3.1}) and choose a different scaling to obtain the subadditivity property (Lemma \ref{lem3.4}).

(2) As to the Hardy-Littlewood-Sobolev lower critical case $b=1$ which is also the most difficult case, we use the extremal function of $S_2$ (Lemma \ref{lem2.7}) to obtain the upper bound of $m_{R_0}(\rho)$ (Lemma \ref{lem3.3}) and use the estimate (\ref{e3.8}) to exclude the vanishing case in the concentration compactness principle (Lemma \ref{lem3.6}). In the proof of Theorem \ref{thm1.2}, to compare the values of $E(u_\tau)$ among $\tau=1$, $\tau\ll 1$ and $\tau\in (\tau_1,\tau_2)$ with $\tau_2<1$, different lower bound and upper bound estimates of $E(u_\tau)$ are made and then by which $m_{R_0}(\rho)$ is proved to be the ground state energy.

(3) Last but not least, by using some `monotonicity' (Remark \ref{rmk3.2} (2)), we can show that the minimizer of $m_{R_0}(\rho)$ must be in $\mathcal{S}_\rho$ and then show that $m_{R_0}(\rho)=\inf\{E(w):\ w\in \mathcal{S}_{\rho},\
E|_{\mathcal{S}_{\rho}}'(w)=0\}$, which is different from the proof of Theorem 1.2 in  \cite{Bieganowski-dAvenia-Schino 2024} and Proposition 1.4 there.

\medskip

\textbf{Organization of the paper}.  In Section 2, we give some preliminary results used in this paper. Section 3 is devoted to the proofs of Theorems \ref{thm1.1} and \ref{thm1.2}.

\medskip

\textbf{Basic notations}:  The usual norm of $u\in
L^p(\mathbb{R}^N)$ is denoted by $\|u\|_p$. $D^{1,2}(\mathbb{R}^N)$ denotes the completion of $C^{\infty}_0 (\mathbb{R}^N)$ with respect to the norm $\|\nabla \cdot \|_2$. \ $o_n(1)$ denotes a real
sequence with $o_n(1) \to 0$ as $n\to +\infty$. `$\rightarrow$'
denotes  strong convergence and `$\rightharpoonup$' denotes weak
convergence. The notation
$A\lesssim B$ means that $A\leq CB$ for some constant $C> 0$. For $N\geq 3$, $2^*:=\frac{2N}{N-2}$. $B(y,1):=\{x\in \mathbb{R}^N:|x-y|\leq 1\}$.

\section{Preliminaries}
\setcounter{section}{2} \setcounter{equation}{0}

The following well-known Hardy-Littlewood-Sobolev inequalities and Sobolev inequality can be
found in \cite{Lieb-Loss 2001}.

\begin{lemma}\label{lem HLS}
Let $N\geq 1$, $p$, $r>1$ and $0<\alpha<N$ with
$1/p+(N-\alpha)/N+1/r=2$. Let $u\in L^p(\mathbb{R}^N)$ and $v\in
L^r(\mathbb{R}^N)$. Then there exists a sharp constant
$C(N,\alpha,p)$, independent of $u$ and $v$, such that
\begin{equation*}
\left|\int_{\mathbb{R}^N}\int_{\mathbb{R}^N}\frac{u(x)v(y)}{|x-y|^{N-\alpha}}dxdy\right|\leq
C(N,\alpha,p)\|u\|_p\|v\|_r.
\end{equation*}
If $p=r=\frac{2N}{N+\alpha}$, then
\begin{equation*}
C(N,\alpha,p)=C_\alpha(N):=\pi^{\frac{N-\alpha}{2}}\frac{\Gamma(\frac{\alpha}{2})}{\Gamma(\frac{N+\alpha}{2})}\left\{\frac{\Gamma(\frac{N}{2})}{\Gamma(N)}\right\}^{-\frac{\alpha}{N}}.
\end{equation*}
\end{lemma}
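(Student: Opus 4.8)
The plan is to establish the inequality in two stages of very different character: boundedness with some finite (non-explicit) constant for all admissible $(p,r)$, and then the \emph{sharp} value $C_\alpha(N)$ in the conformally invariant diagonal case $p=r=\frac{2N}{N+\alpha}$.

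For the boundedness I would first linearize. Writing the double integral as $\int_{\mathbb{R}^N}u(x)\,(Iv)(x)\,dx$ with $(Iv)(x):=\int_{\mathbb{R}^N}|x-y|^{-(N-\alpha)}v(y)\,dy$ and applying H\"older's inequality in the exponents $p,p'$, the claim reduces to the mapping property $\|Iv\|_{p'}\lesssim\|v\|_r$. The scaling constraint $1/p+(N-\alpha)/N+1/r=2$ is precisely $1/p'=1/r-\alpha/N$, the relation forced by the homogeneity of the kernel, so it suffices to prove $\|Iv\|_s\lesssim\|v\|_r$ whenever $1/s=1/r-\alpha/N$ and $r>1$.

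To prove this I split the kernel at a radius $\delta>0$. The near part $\int_{|x-y|\le\delta}|x-y|^{-(N-\alpha)}v(y)\,dy$ is dominated, via a dyadic annular decomposition, by $\delta^{\alpha}Mv(x)$, where $Mv$ is the Hardy--Littlewood maximal function; the far part is bounded by H\"older by $\delta^{\alpha-N/r}\|v\|_r$, the tail integral $\int_{|z|>\delta}|z|^{-(N-\alpha)r'}\,dz$ converging exactly because $1/r>\alpha/N$ in our range. Optimizing $\delta$ to balance the two terms yields the pointwise estimate $|Iv(x)|\lesssim (Mv(x))^{r/s}\|v\|_r^{1-r/s}$; raising to the $s$-th power, integrating, and invoking the $L^r$-boundedness of $M$ (valid since $r>1$) gives $\|Iv\|_s\lesssim\|v\|_r$. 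This Marcinkiewicz-type argument furnishes a finite constant but no explicit value.

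The genuinely hard part is the sharp constant, which occurs only in the diagonal case and requires Lieb's method rather than real-variable estimates. Here I would use the Riesz rearrangement inequality to show the functional cannot decrease under symmetric decreasing rearrangement, so a maximizing sequence may be taken radial and nonincreasing; the diagonal exponent renders the functional invariant under the conformal group, and a competing-symmetries (or concentration-compactness) argument then produces an actual maximizer. Such a maximizer must be a fixed point of the rearrangement-and-conformal dynamics, hence a translate and dilate of the profile $(1+|x|^2)^{-(N+\alpha)/2}$; substituting it and evaluating the ensuing Beta- and Gamma-function integrals delivers the stated value $C_\alpha(N)$. I expect the existence and identification of this extremizer, not the operator bound, to be the main obstacle.
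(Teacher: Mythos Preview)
Your outline is mathematically sound: the Hedberg-type splitting with the maximal function gives the qualitative bound, and Lieb's rearrangement plus conformal-invariance argument yields the sharp constant in the diagonal case. However, the paper does not prove this lemma at all---it is listed among the preliminary results in Section~2 with the sentence ``The following well-known Hardy--Littlewood--Sobolev inequalities and Sobolev inequality can be found in \cite{Lieb-Loss 2001},'' and is simply quoted from that reference. So there is nothing in the paper to compare your argument against; you have supplied a (correct) sketch of a classical proof where the authors merely cite the literature.
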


\begin{lemma}\label{lem2.1}
 Let $N\geq 3$, $\alpha\in (0,N)$, and
\begin{equation}\label{e3.14}
\begin{split}
S_1:&=\inf_{ u\in
D^{1,2}(\mathbb{R}^N)\setminus\{0\}}\frac{\int_{\mathbb{R}^N}|\nabla
u|^2dx }{\left(\int_{\mathbb{R}^N}\left(I_{\alpha}\ast
|u|^{\frac{N+\alpha}{N-2}}\right)|u|^{\frac{N+\alpha}{N-2}}dx\right)^{\frac{N-2}{N+\alpha}}}.
\end{split}
\end{equation}
Then $S_1>0$.
\end{lemma}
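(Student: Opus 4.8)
The plan is to estimate the denominator in \eqref{e3.14} from above by a power of $\|u\|_{2^*}$ via the Hardy--Littlewood--Sobolev inequality (Lemma \ref{lem HLS}), and then to estimate $\|u\|_{2^*}$ from above by $\|\nabla u\|_2$ via the Sobolev inequality; chaining the two estimates produces a strictly positive lower bound for the quotient, hence for $S_1$. Write $q:=\frac{N+\alpha}{N-2}$. The key algebraic observation is that $q\cdot\frac{2N}{N+\alpha}=\frac{2N}{N-2}=2^*$, so that by the embedding $D^{1,2}(\mathbb{R}^N)\hookrightarrow L^{2^*}(\mathbb{R}^N)$ one has $|u|^{q}\in L^{\frac{2N}{N+\alpha}}(\mathbb{R}^N)$ for every $u\in D^{1,2}(\mathbb{R}^N)$, with
\[
\big\||u|^{q}\big\|_{\frac{2N}{N+\alpha}}=\|u\|_{2^*}^{q}.
\]

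Next I would apply Lemma \ref{lem HLS} with $p=r=\frac{2N}{N+\alpha}$: this is admissible because $\alpha<N$ forces $\frac{2N}{N+\alpha}>1$, and one checks directly that $\frac1p+\frac{N-\alpha}{N}+\frac1r=2$. Unfolding the convolution against $I_{\alpha}$ and pulling out the normalization constant $A_\alpha(N)$ from its definition, Lemma \ref{lem HLS} gives
\[
\int_{\mathbb{R}^N}\big(I_{\alpha}\ast|u|^{q}\big)|u|^{q}\,dx
= A_\alpha(N)\int_{\mathbb{R}^N}\!\int_{\mathbb{R}^N}\frac{|u(x)|^{q}|u(y)|^{q}}{|x-y|^{N-\alpha}}\,dx\,dy
\le A_\alpha(N)\,C_\alpha(N)\,\|u\|_{2^*}^{2q},
\]
and in particular the integral on the left is finite and nonnegative. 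Raising both sides to the power $\frac{N-2}{N+\alpha}=\frac1q$ yields
\[
\Big(\int_{\mathbb{R}^N}\big(I_{\alpha}\ast|u|^{q}\big)|u|^{q}\,dx\Big)^{\frac{N-2}{N+\alpha}}\le \big(A_\alpha(N)C_\alpha(N)\big)^{\frac{N-2}{N+\alpha}}\,\|u\|_{2^*}^{2}.
\]

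Finally, let $S:=\inf_{u\in D^{1,2}(\mathbb{R}^N)\setminus\{0\}}\|\nabla u\|_2^2/\|u\|_{2^*}^2>0$ denote the optimal Sobolev constant. Combining the last display with $\|u\|_{2^*}^2\le S^{-1}\|\nabla u\|_2^2$ shows that for every $u\in D^{1,2}(\mathbb{R}^N)\setminus\{0\}$,
\[
\frac{\int_{\mathbb{R}^N}|\nabla u|^2dx}{\big(\int_{\mathbb{R}^N}(I_{\alpha}\ast|u|^{q})|u|^{q}\,dx\big)^{\frac{N-2}{N+\alpha}}}\ \ge\ \frac{S}{\big(A_\alpha(N)C_\alpha(N)\big)^{\frac{N-2}{N+\alpha}}}\ >\ 0,
\]
and taking the infimum over $u$ gives $S_1\ge S\,\big(A_\alpha(N)C_\alpha(N)\big)^{-\frac{N-2}{N+\alpha}}>0$. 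I do not expect a genuine obstacle: the argument is just the composition of the two classical inequalities, and the only points requiring care are the exponent bookkeeping (so that $\frac{2N}{N+\alpha}$ is an admissible HLS exponent and the power $q$ cancels after raising to $\frac{N-2}{N+\alpha}$) and keeping track of the constant $A_\alpha(N)$ coming from the normalization of $I_{\alpha}$. One could even record the explicit value of the resulting lower bound, but only its positivity is needed later.
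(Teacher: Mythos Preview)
Your argument is correct and is exactly the standard route: bound the nonlocal term by $\|u\|_{2^*}^{2q}$ via Lemma~\ref{lem HLS} with $p=r=\tfrac{2N}{N+\alpha}$, then invoke the Sobolev inequality (Lemma~\ref{lem22.1}) to pass to $\|\nabla u\|_2^2$. The paper itself does not supply a proof of Lemma~\ref{lem2.1}; it is listed among the preliminary inequalities quoted from \cite{Lieb-Loss 2001}, so your derivation simply fills in what the paper takes as known. One cosmetic remark: the Sobolev constant you call $S$ is denoted $S_3$ in the paper (Lemma~\ref{lem22.1}), so for consistency you may wish to write the final bound as $S_1\ge S_3\,(A_\alpha(N)C_\alpha(N))^{-\frac{N-2}{N+\alpha}}$.
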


\begin{lemma}\label{lem2.7}
Let $N\geq 1$, $\alpha\in (0,N)$, and
\begin{equation}\label{e1.2}
S_2:=\inf_{u\in
H^1(\mathbb{R}^N)}\frac{\int_{\mathbb{R}^N}|u|^2dx}
{\left(\int_{\mathbb{R}^N}(I_\alpha\ast|u|^{\frac{N+\alpha}{N}})|u|^{\frac{N+\alpha}{N}}dx\right)^{\frac{N}{N+\alpha}}}.
\end{equation}
Then $S_2>0$ and  the minimizer of $S_2$ is
$$u(x)=b\left(\frac{\delta}{\delta^2+|x-y|^2}\right)^{N/2},\ b\in
\mathbb{R},\ \delta>0,\ y\in \mathbb{R}^N.$$
\end{lemma}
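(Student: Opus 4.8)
The plan is to reduce everything to the sharp Hardy--Littlewood--Sobolev inequality of Lemma~\ref{lem HLS} in the diagonal (conformal) case $p=r=\frac{2N}{N+\alpha}$, via the substitution $v:=|u|^{\frac{N+\alpha}{N}}$.

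First I would establish $S_2>0$. For any $u\in H^1(\mathbb{R}^N)\setminus\{0\}$, set $v:=|u|^{\frac{N+\alpha}{N}}$. Then $v\in L^{\frac{2N}{N+\alpha}}(\mathbb{R}^N)$ with $\|v\|_{\frac{2N}{N+\alpha}}^{\frac{2N}{N+\alpha}}=\|u\|_2^2$, and by the definition of the Riesz potential
\begin{equation*}
\int_{\mathbb{R}^N}(I_\alpha\ast|u|^{\frac{N+\alpha}{N}})|u|^{\frac{N+\alpha}{N}}\,dx=A_\alpha(N)\int_{\mathbb{R}^N}\int_{\mathbb{R}^N}\frac{v(x)v(y)}{|x-y|^{N-\alpha}}\,dx\,dy.
\end{equation*}
Since $\frac1p+\frac{N-\alpha}{N}+\frac1r=2$ holds for $p=r=\frac{2N}{N+\alpha}$, Lemma~\ref{lem HLS} bounds the right-hand side by $A_\alpha(N)C_\alpha(N)\|v\|_{\frac{2N}{N+\alpha}}^2=A_\alpha(N)C_\alpha(N)\|u\|_2^{\frac{2(N+\alpha)}{N}}$. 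Taking $\frac{N}{N+\alpha}$-th powers and rearranging gives
\begin{equation*}
\frac{\int_{\mathbb{R}^N}|u|^2\,dx}{\left(\int_{\mathbb{R}^N}(I_\alpha\ast|u|^{\frac{N+\alpha}{N}})|u|^{\frac{N+\alpha}{N}}\,dx\right)^{\frac{N}{N+\alpha}}}\ \geq\ \big(A_\alpha(N)C_\alpha(N)\big)^{-\frac{N}{N+\alpha}}>0,
\end{equation*}
so $S_2\geq\big(A_\alpha(N)C_\alpha(N)\big)^{-N/(N+\alpha)}>0$.

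Next I would identify the minimizers. The only inequality used above is Lemma~\ref{lem HLS} in the diagonal case, so $u$ attains $S_2$ precisely when $v=|u|^{\frac{N+\alpha}{N}}$ is an extremal for that inequality. By Lieb's classification of the extremals of the sharp Hardy--Littlewood--Sobolev inequality (cf.\ \cite{Lieb-Loss 2001}), the nonnegative extremals in the case $p=r=\frac{2N}{N+\alpha}$ are exactly $v(x)=c\,(\delta^2+|x-y|^2)^{-\frac{N+\alpha}{2}}$ with $c\geq0$, $\delta>0$, $y\in\mathbb{R}^N$. Taking $\frac{N}{N+\alpha}$-th roots yields $|u(x)|=c^{\frac{N}{N+\alpha}}(\delta^2+|x-y|^2)^{-\frac N2}$, and since an $H^1(\mathbb{R}^N)$ function whose modulus equals a fixed positive smooth function must have a.e.\ constant sign, after renaming the constant this is $u(x)=b\big(\tfrac{\delta}{\delta^2+|x-y|^2}\big)^{N/2}$, $b\in\mathbb{R}$. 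Conversely, for any such $u$ the function $v$ is an HLS extremal, so equality propagates through the chain above; hence $S_2$ is attained, $S_2=\big(A_\alpha(N)C_\alpha(N)\big)^{-N/(N+\alpha)}$, and these are exactly its minimizers. Finally, such $u$ are admissible: $|u|^2\lesssim(1+|x|^2)^{-N}$ and $|\nabla u|^2\lesssim(1+|x|^2)^{-N-1}$ are integrable on $\mathbb{R}^N$ for $N\geq1$, so $u\in H^1(\mathbb{R}^N)$, and $|u|^{\frac{N+\alpha}{N}}\in L^{\frac{2N}{N+\alpha}}$ makes the nonlocal term finite.

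I do not expect a serious obstacle here: the whole argument is bookkeeping once Lemma~\ref{lem HLS} and the equality analysis of Lieb are in hand, the latter being the only external input (supplying both the positive lower bound and the precise profile of the optimizers). The one point that needs care is the normalization --- tracking the factor $A_\alpha(N)$ coming from $I_\alpha$ against the sharp constant $C_\alpha(N)$ of Lemma~\ref{lem HLS} --- together with the verification that the HLS extremals, after taking $\frac{N}{N+\alpha}$-th roots, really do lie in $H^1(\mathbb{R}^N)$ so as to be legitimate competitors in the definition of $S_2$.
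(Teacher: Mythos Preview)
The paper does not actually prove Lemma~\ref{lem2.7}: it is listed among the preliminary facts ``found in \cite{Lieb-Loss 2001}'' and no argument is given. Your proposal supplies the standard derivation, namely the reduction of $S_2$ to the sharp diagonal Hardy--Littlewood--Sobolev inequality via $v=|u|^{(N+\alpha)/N}$, together with Lieb's classification of the HLS extremals. This is correct and is precisely the content behind the citation; in effect you and the paper invoke the same source, but you make the reduction explicit and track the constants. The only step that is slightly glossed is the claim that an $H^1$ function whose modulus equals a fixed strictly positive smooth profile must have constant sign; this is true for real-valued $u$ (e.g.\ because $u=\theta\,|u|$ with $\theta\in\{\pm1\}$ a.e., and $\nabla u=\theta\,\nabla|u|$ in $\mathcal D'$ forces $\theta$ to be locally constant where $|u|>0$), but in any case the paper only ever uses the forward direction---that these profiles \emph{are} minimizers---in Lemma~\ref{lem3.3}, so the point is inessential for the application.
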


\begin{lemma}\label{lem22.1}
Let $N\geq 3$, and
 \begin{equation*}
 S_3:=\inf_{u\in D^{1,2}(\mathbb{R}^N)\backslash\{0\}}\frac{\|\nabla u\|_2^2}{\|u\|_{2^*}^2}.
\end{equation*}
Then $S_3>0$.
\end{lemma}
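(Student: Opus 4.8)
The plan is to reduce Lemma \ref{lem22.1} to the classical Sobolev inequality
\begin{equation*}
\|u\|_{2^*}\leq C(N)\|\nabla u\|_2\qquad\text{for all }u\in D^{1,2}(\mathbb{R}^N),
\end{equation*}
with $C(N)>0$ depending only on $N$; once this is in hand, dividing by $\|u\|_{2^*}^2$ and taking the infimum over $u\neq 0$ yields $S_3\geq C(N)^{-2}>0$, which is exactly the claim.

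To establish the Sobolev inequality I would first reduce, by density of $C_0^\infty(\mathbb{R}^N)$ in $D^{1,2}(\mathbb{R}^N)$, to the case $u\in C_0^\infty(\mathbb{R}^N)$: if $u_n\to u$ in $D^{1,2}$ with $u_n\in C_0^\infty$, then up to a subsequence $u_n\to u$ a.e., and Fatou's lemma controls $\|u\|_{2^*}$ by $\liminf_n\|u_n\|_{2^*}$. For $u\in C_0^\infty(\mathbb{R}^N)$ one has the elementary pointwise representation
\begin{equation*}
u(x)=\frac{1}{N\omega_N}\int_{\mathbb{R}^N}\frac{(x-y)\cdot\nabla u(y)}{|x-y|^N}\,dy,
\end{equation*}
where $\omega_N=|B(0,1)|$, obtained by writing $u(x)=-\int_0^\infty\frac{d}{dr}u(x+r\theta)\,dr$ for each $\theta\in S^{N-1}$ and averaging over $\theta$. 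Hence
\begin{equation*}
|u(x)|\leq\frac{1}{N\omega_N}\int_{\mathbb{R}^N}\frac{|\nabla u(y)|}{|x-y|^{N-1}}\,dy=c_N\,(I_1\ast|\nabla u|)(x)
\end{equation*}
for a suitable constant $c_N>0$ (recall $I_1(x)=A_1(N)|x|^{-(N-1)}$).

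Next I would invoke the Hardy--Littlewood--Sobolev inequality (Lemma \ref{lem HLS}) in its convolution-norm form, which follows from the stated bilinear form by duality: with $\alpha=1$ and $p=2$ one has $\tfrac1{2^*}=\tfrac12-\tfrac1N$, and the exponent conditions are admissible precisely because $N\geq 3$ guarantees $1<2<2^*<\infty$. Therefore $\|I_1\ast|\nabla u|\|_{2^*}\leq C\,\|\nabla u\|_2$, and combining with the previous display gives $\|u\|_{2^*}\leq C(N)\|\nabla u\|_2$ with $C(N)=c_NC$. Putting the pieces together proves $S_3>0$.

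There is no serious obstacle here; the statement is classical. The only points needing a little care are the density/approximation argument passing from $C_0^\infty$ to $D^{1,2}$, and the bookkeeping of exponents in Lemma \ref{lem HLS}. By contrast, identifying the sharp value of $S_3$ together with its extremal functions --- the content of the Talenti--Aubin theorem --- would be substantially more involved, but that is not required for the present lemma. An alternative route avoiding Lemma \ref{lem HLS} would be the Gagliardo--Nirenberg--Sobolev argument: first prove the case $p=1$ via the fundamental theorem of calculus and a multilinear Hölder estimate, and then optimize over powers of $|u|$.
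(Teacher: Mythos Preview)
Your argument is correct. Note, however, that the paper does not actually supply a proof of this lemma: it is stated in the preliminaries together with Lemmas \ref{lem HLS}--\ref{lem2.7} under the umbrella sentence ``The following well-known Hardy--Littlewood--Sobolev inequalities and Sobolev inequality can be found in \cite{Lieb-Loss 2001},'' and no further justification is given. So there is no paper proof to compare against; what you have written is a standard self-contained derivation (pointwise Riesz-potential bound followed by Hardy--Littlewood--Sobolev with $\alpha=1$, $p=2$, $r'=2^*$), which is essentially the route taken in the cited reference. Your bookkeeping of the exponents and the density passage from $C_0^\infty$ to $D^{1,2}$ are fine.
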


\section{Proofs of the main results}
\setcounter{section}{3} \setcounter{equation}{0}

From  ($G_1$), There exists $C_0>0$ such that
\begin{equation}\label{e3.13}
|G(t)|\leq C_0(|t|^{\frac{N+\alpha}{N}}+|t|^{\frac{N+\alpha}{N-2}}),\ \forall\ t\in\mathbb{R}.
\end{equation}

We first obtain a lower bound of $E(u)$ on $\mathcal{D}_{\rho}$.

\begin{lemma}\label{lem3.1}
Let ($G_1$)  hold, $S_1$, $S_2$, $S_3$, $C_\alpha(N)$ be defined in Lemmas \ref{lem HLS}-\ref{lem22.1}, and $C_0$ be as in (\ref{e3.13}). Then
\begin{equation*}
E(u)\geq -\frac{1}{2}b^2S_2^{-\frac{N+\alpha}{N}}\rho^{2\frac{N+\alpha}{N}}+h(\rho,\|\nabla u\|_2)\|\nabla u\|_2^2,\ \forall u\in \mathcal{D}_{\rho},
\end{equation*}
where $h:(0,+\infty)\times (0,+\infty)\to \mathbb{R}$ is defined by
\begin{equation*}
h(a,t):=\frac{1}{2}-C_1a^{2\frac{N+\alpha}{N}}t^{-2}-C_2t^{2\frac{N+\alpha}{N-2}-2},
\end{equation*}
\begin{equation*}
C_1:=\frac{1}{2}C_0(2b+C_0)S_2^{-\frac{N+\alpha}{N}}+\frac{1}{2}C_0(b+C_0)A_{\alpha}(N)C_\alpha(N)S_3^{-\frac{1}{2}\frac{N+\alpha}{N-2}},
\end{equation*}
and
\begin{equation*}
C_2:=\frac{1}{2}C_0^2S_1^{-\frac{N+\alpha}{N-2}}+\frac{1}{2}C_0(b+C_0)A_{\alpha}(N)C_\alpha(N)S_3^{-\frac{1}{2}\frac{N+\alpha}{N-2}}.
\end{equation*}
\end{lemma}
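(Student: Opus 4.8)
The plan is to bound the nonlocal term
$\frac12\int_{\mathbb{R}^N}(I_\alpha\ast F(u))F(u)\,dx$ from above, where
$F(u)=b|u|^{\frac{N+\alpha}{N}}+G(u)$. Expanding the product, one gets four pieces:
$b^2\int (I_\alpha\ast|u|^{\frac{N+\alpha}{N}})|u|^{\frac{N+\alpha}{N}}$,
two cross terms $b\int (I_\alpha\ast|u|^{\frac{N+\alpha}{N}})G(u)$ (and its symmetric partner), and
$\int (I_\alpha\ast G(u))G(u)$. For the pure lower-critical piece, I would apply the definition of $S_2$ directly to get
$\int (I_\alpha\ast|u|^{\frac{N+\alpha}{N}})|u|^{\frac{N+\alpha}{N}}\le S_2^{-\frac{N+\alpha}{N}}\|u\|_2^{2\frac{N+\alpha}{N}}\le S_2^{-\frac{N+\alpha}{N}}\rho^{2\frac{N+\alpha}{N}}$ on $\mathcal{D}_\rho$; this produces the leading constant term $-\frac12 b^2 S_2^{-\frac{N+\alpha}{N}}\rho^{2\frac{N+\alpha}{N}}$.

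For the remaining three pieces I would first invoke the pointwise bound (\ref{e3.13}),
$|G(t)|\le C_0(|t|^{\frac{N+\alpha}{N}}+|t|^{\frac{N+\alpha}{N-2}})$, to reduce everything to integrals of the form
$\int (I_\alpha\ast |u|^{s_1})|u|^{s_2}$ with exponents $s_i\in\{\frac{N+\alpha}{N},\frac{N+\alpha}{N-2}\}$. I would control these in two complementary ways. When both exponents are $\frac{N+\alpha}{N}$, use $S_2$ again; when both are $\frac{N+\alpha}{N-2}$, use $S_1$ from Lemma \ref{lem2.1}; for the genuinely mixed products, rewrite $I_\alpha\ast(\cdot)$ in terms of the Riesz kernel constant $A_\alpha(N)$, apply the Hardy--Littlewood--Sobolev inequality (Lemma \ref{lem HLS}) with the sharp constant $C_\alpha(N)$ in the endpoint case $p=r=\frac{2N}{N+\alpha}$, and then bound the resulting $L^{\frac{2N}{N+\alpha}}$-norms of $|u|^{\frac{N+\alpha}{N}}$ and $|u|^{\frac{N+\alpha}{N-2}}$ by $\|u\|_2$ and $\|u\|_{2^*}$ respectively, using the Sobolev inequality $\|u\|_{2^*}^2\le S_3^{-1}\|\nabla u\|_2^2$ from Lemma \ref{lem22.1}. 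Interpolating $\|u\|_2\le\rho$ into every factor, each term becomes a product of a power of $\rho$ and a power of $\|\nabla u\|_2$; collecting the coefficients of $\|\nabla u\|_2^0$, $\|\nabla u\|_2^{\frac{N+\alpha}{N-2}}$ (from the cross terms, after using $\|\nabla u\|_2 = \|\nabla u\|_2^2\cdot\|\nabla u\|_2^{-1}$ type rearrangements) gives exactly the constants $C_1$ and $C_2$ in the statement, with the $\rho^{2\frac{N+\alpha}{N}}\|\nabla u\|_2^{-2}$ and $\|\nabla u\|_2^{2\frac{N+\alpha}{N-2}-2}$ shapes after factoring out $\|\nabla u\|_2^2$.

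Finally I would assemble: $E(u)=\frac12\|\nabla u\|_2^2-\frac12\int(I_\alpha\ast F(u))F(u)\ge \frac12\|\nabla u\|_2^2-\frac12 b^2 S_2^{-\frac{N+\alpha}{N}}\rho^{2\frac{N+\alpha}{N}}-(C_1\rho^{2\frac{N+\alpha}{N}}\|\nabla u\|_2^{-2}+C_2\|\nabla u\|_2^{2\frac{N+\alpha}{N-2}-2})\|\nabla u\|_2^2$, which is precisely $-\frac12 b^2 S_2^{-\frac{N+\alpha}{N}}\rho^{2\frac{N+\alpha}{N}}+h(\rho,\|\nabla u\|_2)\|\nabla u\|_2^2$. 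The main bookkeeping obstacle is the mixed cross terms: one must be careful that the homogeneity in $\|\nabla u\|_2$ matches across all contributions so that everything can be factored as $h(\rho,\|\nabla u\|_2)\|\nabla u\|_2^2$, and in particular that the cross-term estimate, which naively carries $\|u\|_2^{\frac{N+\alpha}{N}}\|u\|_{2^*}^{\frac{N+\alpha}{N-2}}$, is split correctly between the $C_1$ (low-gradient) and $C_2$ (high-gradient) coefficients — I expect to handle this by writing the cross term's gradient power $\|\nabla u\|_2^{\frac{N+\alpha}{N-2}}$ and noting $\frac{N+\alpha}{N-2}<2$ iff the exponent sits in the relevant regime, or more simply by the elementary inequality $st\le s^2+t^2$ applied to separate the two Sobolev factors before interpolation. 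No compactness or variational machinery is needed here; it is purely a chain of sharp inequalities.
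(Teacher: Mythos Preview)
Your proposal is correct and follows essentially the same route as the paper: bound $|G|$ via (\ref{e3.13}), expand the convolution into the building-block integrals $\int(I_\alpha\ast|u|^{s_1})|u|^{s_2}$ with $s_i\in\{\tfrac{N+\alpha}{N},\tfrac{N+\alpha}{N-2}\}$, control the diagonal ones by $S_2$ and $S_1$ and the mixed one by HLS (with constant $A_\alpha(N)C_\alpha(N)$) followed by Sobolev, and then split the resulting cross term $\|u\|_2^{\frac{N+\alpha}{N}}\|\nabla u\|_2^{\frac{N+\alpha}{N-2}}$ with Young's inequality. Two small corrections on the bookkeeping: (i) to land on the \emph{stated} constants $C_1,C_2$ you need the sharp form $ab\le\tfrac12(a^2+b^2)$, not $st\le s^2+t^2$; (ii) your alternative idea of using $\tfrac{N+\alpha}{N-2}<2$ to absorb the cross term directly does not work in general (for instance $N=3$ gives $\tfrac{N+\alpha}{N-2}=3+\alpha>2$), so the Young-inequality split is indeed the step that makes the homogeneities match.
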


\begin{proof}
By using Lemmas \ref{lem HLS}-\ref{lem22.1}, we obtain that
\begin{equation*}
\int_{\mathbb{R}^N}(I_\alpha\ast|u|^{\frac{N+\alpha}{N-2}})|u|^{\frac{N+\alpha}{N-2}}dx\leq S_1^{-\frac{N+\alpha}{N-2}}\|\nabla u\|_2^{2\frac{N+\alpha}{N-2}},
\end{equation*}
\begin{equation*}
\int_{\mathbb{R}^N}(I_\alpha\ast|u|^{\frac{N+\alpha}{N}})|u|^{\frac{N+\alpha}{N}}dx\leq S_2^{-\frac{N+\alpha}{N}}\| u\|_2^{2\frac{N+\alpha}{N}},
\end{equation*}
\begin{equation*}
\begin{split}
\int_{\mathbb{R}^N}(I_\alpha\ast|u|^{\frac{N+\alpha}{N}})|u|^{\frac{N+\alpha}{N-2}}dx&\leq A_{\alpha}(N)C_{\alpha}(N)\|u\|_2^{\frac{N+\alpha}{N}}\| u\|_{2^*}^{\frac{N+\alpha}{N-2}}\\
&\leq A_{\alpha}(N)C_{\alpha}(N)S_3^{-\frac{1}{2}\frac{N+\alpha}{N-2}}\|u\|_2^{\frac{N+\alpha}{N}}\| \nabla u\|_{2}^{\frac{N+\alpha}{N-2}},
\end{split}
\end{equation*}
which combined with   (\ref{e3.13})  gives that
\begin{equation}\label{e3.11}
\begin{split}
E(u)&\geq \frac{1}{2}\|\nabla u\|_2^2-\frac{1}{2}\int_{\mathbb{R}^N}[I_\alpha\ast(b|u|^{\frac{N+\alpha}{N}}+C_0|u|^{\frac{N+\alpha}{N}}+C_0|u|^{\frac{N+\alpha}{N-2}})]\\
&\qquad\qquad\qquad\qquad\qquad(b|u|^{\frac{N+\alpha}{N}}+C_0|u|^{\frac{N+\alpha}{N}}+C_0|u|^{\frac{N+\alpha}{N-2}})dx\\
&\geq \frac{1}{2}\|\nabla u\|_2^2-\frac{1}{2}b^2S_2^{-\frac{N+\alpha}{N}}\| u\|_2^{2\frac{N+\alpha}{N}}-\frac{1}{2}C_0(2b+C_0)S_2^{-\frac{N+\alpha}{N}}\| u\|_2^{2\frac{N+\alpha}{N}}\\
&\qquad -C_0(b+C_0)A_{\alpha}(N)C_{\alpha}(N)S_3^{-\frac{1}{2}\frac{N+\alpha}{N-2}}\|u\|_2^{\frac{N+\alpha}{N}}\| \nabla u\|_{2}^{\frac{N+\alpha}{N-2}}\\
&\qquad -\frac{1}{2}C_0^2S_1^{-\frac{N+\alpha}{N-2}}\|\nabla u\|_2^{2\frac{N+\alpha}{N-2}}.
\end{split}
\end{equation}
By using $ab\leq \frac{1}{2}(a^2+b^2)$, we  further obtain that
\begin{equation}\label{e3.2}
E(u)\geq -\frac{1}{2}b^2S_2^{-\frac{N+\alpha}{N}}\| u\|_2^{2\frac{N+\alpha}{N}}+\frac{1}{2}\|\nabla u\|_2^2-C_1
\|u\|_2^{2\frac{N+\alpha}{N}}-C_2\| \nabla u\|_{2}^{2\frac{N+\alpha}{N-2}},
\end{equation}
which implies the lemma for any $u\in \mathcal{D}_{\rho}$.
\end{proof}

Now we give some properties of $h(a,t)$.

\begin{lemma}\label{lem3.2}
The following facts hold.\\
(g1) For every $a>0$, the function $t\mapsto h(a, t)$ has a unique critical point, which is a global maximizer.\\
(g2) If
\begin{equation}\label{e3.1}
\rho^{2\frac{N+\alpha}{N}}<\left(\frac{N-2}{2C_2(N+\alpha)}\right)^{\frac{N+\alpha}{2+\alpha}}\frac{C_2}{C_1}{\frac{2+\alpha}{N-2}}
\end{equation}
holds, then there exist $R_0,R_1 > 0$, $R_0 < R_1$, such that $h(\rho,R_0)=h(\rho,R_1)=0$, $h(\rho, t)>0$
for $t\in(R_0,R_1)$, and $h(\rho, t)<0$ for $t\in(0,R_0)\cup(R_1,+\infty)$.\\
(g3) If $t>0$ and $a_1\geq a_2>0$, then for every $s\in [ta_2/a_1, t]$ there holds  $h(a_2, s)\geq h(a_1, t)$.
\end{lemma}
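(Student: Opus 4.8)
The plan is to establish (g1)--(g3) by elementary one-variable calculus on the function $t\mapsto h(a,t)$. Set $\gamma:=2\frac{N+\alpha}{N-2}-2=\frac{2(\alpha+2)}{N-2}$ and $\delta:=2\frac{N+\alpha}{N}$, so that $\gamma>0$, $\delta>2$, and $h(a,t)=\frac12-C_1a^{\delta}t^{-2}-C_2t^{\gamma}$ with $C_1,C_2>0$. For (g1) I would compute $\partial_t h(a,t)=t^{-3}\bigl(2C_1a^{\delta}-\gamma C_2t^{\gamma+2}\bigr)$; the bracketed quantity is strictly decreasing in $t$, equals $2C_1a^{\delta}>0$ at $t=0^+$ and tends to $-\infty$ as $t\to+\infty$, hence vanishes at the single point
\[
t_*(a)=\Bigl(\frac{2C_1a^{\delta}}{\gamma C_2}\Bigr)^{1/(\gamma+2)},
\]
so that $h(a,\cdot)$ is strictly increasing on $(0,t_*(a))$ and strictly decreasing on $(t_*(a),+\infty)$; thus $t_*(a)$ is the unique critical point and a global maximizer, which is (g1).

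For (g2) I would substitute $t=t_*(\rho)$ into $h$ and, using $t_*^{\gamma+2}=2C_1\rho^{\delta}/(\gamma C_2)$, obtain after a short computation
\[
\max_{t>0}h(\rho,t)=h(\rho,t_*)=\frac12-\frac{\gamma+2}{2}\Bigl(\frac{2}{\gamma}\Bigr)^{\gamma/(\gamma+2)}\bigl(C_1\rho^{\delta}\bigr)^{\gamma/(\gamma+2)}C_2^{2/(\gamma+2)}.
\]
Rearranging, $h(\rho,t_*)>0$ is equivalent to $C_1\rho^{\delta}<\frac{\gamma}{2}(\gamma+2)^{-(\gamma+2)/\gamma}C_2^{-2/\gamma}$, and inserting $\gamma=\frac{2(\alpha+2)}{N-2}$, $\gamma+2=\frac{2(N+\alpha)}{N-2}$, $(\gamma+2)/\gamma=\frac{N+\alpha}{\alpha+2}$, $2/\gamma=\frac{N-2}{\alpha+2}$ and $\delta=2\frac{N+\alpha}{N}$ shows this is exactly condition~(\ref{e3.1}). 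Assuming (\ref{e3.1}), the function $h(\rho,\cdot)$ is continuous, tends to $-\infty$ as $t\to0^+$ and as $t\to+\infty$, has positive maximum value $h(\rho,t_*)$, and is strictly monotone on each side of $t_*(\rho)$ by (g1); hence it has exactly two zeros $R_0<t_*(\rho)<R_1$, is positive on $(R_0,R_1)$ and negative on $(0,R_0)\cup(R_1,+\infty)$, which is (g2).

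For (g3) I would fix $t>0$, $a_1\ge a_2>0$ and $s\in[ta_2/a_1,t]$, and reduce the claim to the inequality $C_1a_2^{\delta}s^{-2}+C_2s^{\gamma}\le C_1a_1^{\delta}t^{-2}+C_2t^{\gamma}$. Since $0<s\le t$ and $\gamma>0$ one has $C_2s^{\gamma}\le C_2t^{\gamma}$; and since $s/t\ge a_2/a_1$ while $0<a_2/a_1\le1$ and $\delta>2$, one has $(s/t)^2\ge(a_2/a_1)^2\ge(a_2/a_1)^{\delta}$, i.e. $a_2^{\delta}s^{-2}\le a_1^{\delta}t^{-2}$; multiplying the latter by $C_1>0$ and adding the two inequalities finishes the proof. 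I expect the only slightly delicate point to be the exponent bookkeeping in (g2) needed to recognize the maximum-value threshold as precisely (\ref{e3.1}); everything else is routine calculus.
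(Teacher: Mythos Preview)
Your proof is correct and for (g1)--(g2) follows exactly the paper's route (compute the unique critical point $t_*(a)$, evaluate the maximum, and identify the positivity condition with~(\ref{e3.1})). The only noticeable difference is in (g3): the paper checks the two endpoints $s=t$ and $s=ta_2/a_1$ separately and then invokes the unimodality from (g1) (the minimum of $s\mapsto h(a_2,s)$ on $[ta_2/a_1,t]$ is attained at an endpoint), whereas you argue directly for every $s$ in the interval via the two termwise bounds $C_2s^{\gamma}\le C_2t^{\gamma}$ and $(s/t)^2\ge(a_2/a_1)^2\ge(a_2/a_1)^{\delta}$. Your argument is slightly more elementary since it avoids appealing to (g1); the paper's endpoint-plus-unimodality approach is equally short but reuses the structure already established. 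Either way the exponent bookkeeping you flagged in (g2) is the only place requiring care, and your verification of it is correct.
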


\begin{proof}
For fixed $a>0$,  set $w_{a}(t):=h(a,t)$. The unique critical point of $w_{a}(t)$ is $$t_0=\left(\frac{C_1(N-2)a^{2\frac{N+\alpha}{N}}}{C_2(2+\alpha)}\right)^{\frac{N-2}{2(N+\alpha)}}$$
and the maximum is
\begin{equation*}
w_{a}(t_0)=\frac{1}{2}-C_2{\frac{N+\alpha}{N-2}}\left(\frac{C_1(N-2)a^{2\frac{N+\alpha}{N}}}{C_2(2+\alpha)}\right)^{\frac{2+\alpha}{N+\alpha}}.
\end{equation*}
Thus if (\ref{e3.1}) holds, we obtain that  $w_{\rho}(t_0)>0$. This proves (g1) and (g2).

Concerning (g3), it is clear that $h(a_2, t)\geq h(a_1, t)$ for all $t > 0$. Moreover,
\begin{equation*}
\begin{split}
h(a_2,t\frac{a_2}{a_1})-h(a_1,t)&=C_1 \left[1-\left(\frac{a_2}{a_1}\right)^{\frac{2\alpha}{N}}\right]a_1^{2\frac{N+\alpha}{N}}t^{-2}\\
&\qquad+C_2 \left[1-\left(\frac{a_2}{a_1}\right)^{2\frac{N+\alpha}{N-2}-2}\right]t^{2\frac{N+\alpha}{N-2}-2}\\
&\geq 0.
\end{split}
\end{equation*}
Hence we obtain (g3) thanks to (g1).
\end{proof}

Let us obtain the upper bound of $m_R(a)$.

\begin{lemma}\label{lem3.3}
Let ($G_1$) and ($G_2$) hold, then $$m_R(a)\in \left(-\infty,-\frac{1}{2}b^2S_2^{-\frac{N+\alpha}{N}}a^{2\frac{N+\alpha}{N}}\right)$$ for every $a,R>0$.
\end{lemma}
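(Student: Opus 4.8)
The plan is to prove the two inclusions separately. For the lower bound $m_R(a)>-\infty$, I would invoke Lemma~\ref{lem3.1}: for $u\in\mathcal{U}_R(a)$ we have $\|u\|_2\le a$ and $\|\nabla u\|_2<R$, so
\[
E(u)\ge -\tfrac12 b^2 S_2^{-\frac{N+\alpha}{N}}a^{2\frac{N+\alpha}{N}}+h(a,\|\nabla u\|_2)\|\nabla u\|_2^2 .
\]
The function $h(a,\cdot)$ on $(0,R]$ is bounded below (it tends to $-\infty$ as $t\to0^+$ like $-C_1a^{2(N+\alpha)/N}t^{-2}$, but multiplied by $t^2$ the product $h(a,t)t^2=\tfrac12 t^2-C_1a^{2(N+\alpha)/N}-C_2t^{2(N+\alpha)/(N-2)}$ stays bounded on $(0,R]$), whence $E$ is bounded below on $\mathcal{U}_R(a)$ and $m_R(a)>-\infty$.

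For the strict upper bound $m_R(a)<-\tfrac12 b^2 S_2^{-\frac{N+\alpha}{N}}a^{2\frac{N+\alpha}{N}}$, I would split into the two cases $b=0$ and $b=1$ dictated by the structure of $F$. When $b=0$ the claimed bound is simply $m_R(a)<0$, and I would produce a test function of the form $u_\tau(x)=\tau^{N/2}u(\tau x)$ with $u\in\mathcal{S}_a$ fixed (hence $\|u_\tau\|_2=a$) and $\tau$ small. Then $\|\nabla u_\tau\|_2=\tau\|\nabla u\|_2<R$ for $\tau$ small, so $u_\tau\in\mathcal{U}_R(a)$, while $E(u_\tau)=\tfrac12\tau^2\|\nabla u\|_2^2-\tfrac12\int(I_\alpha\ast G(u_\tau))G(u_\tau)\,dx$. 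Using $(G_2)$ with $b=0$, which forces $G(t)/|t|^{1+\frac{2+\alpha}{N}}\to+\infty$ as $t\to0$, together with the change of variables in the Riesz convolution, I expect the nonlocal term to dominate $\tau^2$ as $\tau\to0$ (the scaling gives a factor $\tau^{2\alpha}$ in front of $\int(I_\alpha\ast|u|^{1+\frac{2+\alpha}{N}})|u|^{1+\frac{2+\alpha}{N}}$, which beats $\tau^2$), so that $E(u_\tau)<0$ for $\tau$ small. Care must be taken that the limit in $(G_2)$ is only at $t=0$, so I would either first work with a $u$ of small sup-norm or insert $u_\tau$'s pointwise smallness as $\tau\to0$ together with Fatou/monotone convergence after the substitution $y=\tau x$.

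When $b=1$ the target value $-\tfrac12 S_2^{-\frac{N+\alpha}{N}}a^{2\frac{N+\alpha}{N}}$ is exactly the value that the pure lower-critical term $\tfrac12\int(I_\alpha\ast|u|^{\frac{N+\alpha}{N}})|u|^{\frac{N+\alpha}{N}}$ attains (with a minus sign) at the $S_2$-extremal; this is where Lemma~\ref{lem2.7} enters. I would take as test function a truncation/rescaling of the explicit minimizer $U_\delta(x)=(\delta/(\delta^2+|x|^2))^{N/2}$ of $S_2$, normalized so that $\|u\|_2=a$, and then rescale by $u_\tau$ with $\tau$ small so that $\|\nabla u_\tau\|_2<R$. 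Writing $F=|u|^{\frac{N+\alpha}{N}}+G(u)$ and expanding $E(u_\tau)$, the leading piece is $-\tfrac12\int(I_\alpha\ast|u_\tau|^{\frac{N+\alpha}{N}})|u_\tau|^{\frac{N+\alpha}{N}}=-\tfrac12 S_2^{-\frac{N+\alpha}{N}}a^{2\frac{N+\alpha}{N}}$ (scaling-invariant, equal to the target), the kinetic term contributes $+\tfrac12\tau^2\|\nabla u\|_2^2=o(1)$, the cross terms $\int(I_\alpha\ast|u_\tau|^{\frac{N+\alpha}{N}})G(u_\tau)$ are lower order by $(G_2)$–$(G_3)$, and the decisive term is $-\tfrac12\int(I_\alpha\ast G(u_\tau))G(u_\tau)$, which by $(G_2)$ with $b=1$ (i.e. $G(t)/|t|^{1+\frac{4+\alpha}{N}}\to+\infty$) is strictly negative of a larger order in $\tau$ than the $\tau^2$ kinetic error; balancing the two shows $E(u_\tau)$ drops strictly below the target for suitable small $\tau$. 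The main obstacle is precisely this quantitative balancing in the $b=1$ case: one must choose the truncation parameter (to sit inside $\mathcal{U}_R(a)$ and keep $U_\delta$ in $L^2$) and the scaling parameter $\tau$ simultaneously, controlling the error terms uniformly, and exploit the pointwise hypothesis $(G_2)$—valid only near $0$—via the fact that $u_\tau\to0$ uniformly on the support as $\tau\to0$, so that $G(u_\tau)\ge M|u_\tau|^{1+\frac{4+\alpha}{N}}$ holds for any prescribed large $M$ once $\tau$ is small. This matches the remark in the introduction that Lemma~\ref{lem2.7} is used to get the upper bound of $m_{R_0}(\rho)$.
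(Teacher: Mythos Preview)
Your lower-bound argument and the $b=0$ case are essentially the paper's proof. The genuine gap is in the $b=1$ case: you misidentify which term beats the kinetic energy.

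Under $(G_2)$ with $b=1$ you only know $G(t)/|t|^{1+\frac{4+\alpha}{N}}\to+\infty$, with no rate. If you scale $u_\tau(x)=\tau^{N/2}u(\tau x)$ and compute, the pure $G$--$G$ convolution obeys
\[
\int_{\mathbb{R}^N}(I_\alpha\ast G(u_\tau))G(u_\tau)\,dx
\;\ge\; M^2\,\tau^{4}\int_{\mathbb{R}^N}(I_\alpha\ast |u|^{1+\frac{4+\alpha}{N}})|u|^{1+\frac{4+\alpha}{N}}\,dx
\]
for any fixed $M$ once $\tau$ is small; but $M$ and $\tau$ are coupled, and for e.g.\ $G(t)\sim |t|^{1+\frac{4+\alpha}{N}}\sqrt{\log(1/|t|)}$ near $0$ this term is only $\tau^{4}\log(1/\tau)=o(\tau^{2})$. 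So the $G$--$G$ term need \emph{not} dominate the $\tau^{2}$ kinetic error, and your ``balancing'' step fails. The term you dismiss as ``lower order'' is exactly the decisive one: the cross term
\[
-\,b\int_{\mathbb{R}^N}(I_\alpha\ast |u_\tau|^{\frac{N+\alpha}{N}})G(u_\tau)\,dx
\]
is negative (since $G>0$ near $0$ by $(G_2)$), and after dividing by $\tau^{2}$ it equals
\[
-\,\frac{1}{\tau^{(N+\alpha)/2+2}}\int_{\mathbb{R}^N}(I_\alpha\ast |u|^{\frac{N+\alpha}{N}})G(\tau^{N/2}u)\,dx
\;\le\; -\,M\int_{\mathbb{R}^N}(I_\alpha\ast |u|^{\frac{N+\alpha}{N}})|u|^{1+\frac{4+\alpha}{N}}\,dx\ \longrightarrow\ -\infty,
\]
by Fatou and $(G_2)$; this is precisely how the paper closes the argument for $b=1$. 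A minor point: the $S_2$-extremal $U(x)=(\delta/(\delta^{2}+|x|^{2}))^{N/2}$ already lies in $H^{1}(\mathbb{R}^N)$, so the truncation you propose is unnecessary and only introduces errors in the ``leading piece'' that must then be controlled.
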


\begin{proof}
Firstly, it follows from (\ref{e3.2}) that $m_R(a)>-\infty$. Then, by Lemma \ref{lem2.7}, we choose $v\in H^1(\mathbb{R}^N)$ such that
\begin{equation*}
S_2=\frac{\int_{\mathbb{R}^N}|v|^2dx}
{\left(\int_{\mathbb{R}^N}(I_\alpha\ast|v|^{\frac{N+\alpha}{N}})|v|^{\frac{N+\alpha}{N}}dx\right)^{\frac{N}{N+\alpha}}}.
\end{equation*}
 Let $u:=\frac{av}{\|v\|_2}$ and $u_{\tau}(x):=\tau^{\frac{N}{2}}u(\tau x)$ for $\tau>0$, then $\|u_\tau\|_2=a$ for all $\tau>0$, and
\begin{equation*}
\begin{split}
E(u_\tau)&=-\frac{1}{2}b^2S_2^{-\frac{N+\alpha}{N}}a^{2\frac{N+\alpha}{N}}\\
&\qquad +\frac{1}{2}\tau^2\left\{\|\nabla u\|_2^2-\frac{1}{2}\frac{1}{\tau^{N+\alpha+2}}\int_{\mathbb{R}^N}(I_\alpha\ast G(\tau^{\frac{N}{2}}u))G(\tau^{\frac{N}{2}}u)dx\right.\\
&\qquad\qquad\qquad\qquad\left.-b\frac{1}{\tau^{(N+\alpha)/2+2}}\int_{\mathbb{R}^N}(I_\alpha\ast |u|^{\frac{N+\alpha}{N}})G(\tau^{\frac{N}{2}}u)dx
\right\}.
\end{split}
\end{equation*}
Note that, from ($G_2$), $G(\tau^{\frac{N}{2}}u)> 0$  for sufficiently small $\tau> 0$. Therefore, from Fatou's
Lemma and again ($G_2$),
\begin{equation*}
\lim_{\tau\to 0^+}\frac{1}{\tau^{N+\alpha+2}}\int_{\mathbb{R}^N}(I_\alpha\ast G(\tau^{\frac{N}{2}}u))G(\tau^{\frac{N}{2}}u)dx=+\infty\ \text{if}\ b=0
\end{equation*}
and
\begin{equation*}
\lim_{\tau\to 0^+}\frac{1}{\tau^{(N+\alpha)/2+2}}\int_{\mathbb{R}^N}(I_\alpha\ast |u|^{\frac{N+\alpha}{N}})G(\tau^{\frac{N}{2}}u)dx=+\infty\  \text{if}\ b=1.
\end{equation*}
This implies that $E(u_\tau) < -\frac{1}{2}b^2S_2^{-\frac{N+\alpha}{N}}a^{2\frac{N+\alpha}{N}}$ for sufficiently small $\tau> 0$, and since $u_\tau\in \mathcal{U}_R(a)$ provided $\tau > 0$ is
small, we complete the proof.
\end{proof}

Now we show a property ensuring that $m_{R_0}(\rho)$ cannot be attained at the boundary of $\mathcal{U}_{R_0}(\rho)$ under the conditions ($G_1$), ($G_2$) and (\ref{e3.1}).

\begin{remark}\label{rmk3.1}
By Lemma \ref{lem3.3}, $$m_{R_0}(\rho)+\frac{1}{2}b^2S_2^{-\frac{N+\alpha}{N}}\rho^{2\frac{N+\alpha}{N}}<0,$$ and by Lemma \ref{lem3.2} (g2), there exists $\epsilon>0$ such that
\begin{equation*}
0\geq h(\rho, t)>\frac{m_{R_0}(\rho)+\frac{1}{2}b^2S_2^{-\frac{N+\alpha}{N}}\rho^{2\frac{N+\alpha}{N}}}{R_0^2},\ \forall\ t\in [R_0-\epsilon,R_0],
\end{equation*}
which combined with Lemma \ref{lem3.1} gives that for all $u\in \mathcal{D}_{\rho}$ such that $R_0-\epsilon\leq \|\nabla u\|_2\leq R_0$ there holds
\begin{equation*}
\begin{split}
E(u)&\geq -\frac{1}{2}b^2S_2^{-\frac{N+\alpha}{N}}\rho^{2\frac{N+\alpha}{N}}+h(\rho, \|\nabla u\|_2)\|\nabla u\|_2^2\\
&> -\frac{1}{2}b^2S_2^{-\frac{N+\alpha}{N}}\rho^{2\frac{N+\alpha}{N}}+\frac{m_{R_0}(\rho)+\frac{1}{2}b^2S_2^{-\frac{N+\alpha}{N}}\rho^{2\frac{N+\alpha}{N}}}{R_0^2}\|\nabla u\|_2^2\\
&\geq -\frac{1}{2}b^2S_2^{-\frac{N+\alpha}{N}}\rho^{2\frac{N+\alpha}{N}}+\frac{m_{R_0}(\rho)+\frac{1}{2}b^2S_2^{-\frac{N+\alpha}{N}}\rho^{2\frac{N+\alpha}{N}}}{R_0^2}{R_0^2}\\
&=m_{R_0}(\rho).
\end{split}
\end{equation*}
\end{remark}

Next, we show the following subadditivity property.

\begin{lemma}\label{lem3.4}
If ($G_1$), ($G_2$), and (\ref{e3.1}) hold, then for every $a\in(0, \rho)$ there holds
\begin{equation*}
m_{R_0}(\rho)\leq  m_{R_0}(a)+m_{R_0}(\sqrt{\rho^2-a^2}).
\end{equation*}
Moreover, the inequality above is strict if at least one between $m_{R_0}(\sqrt{\rho^2-a^2})$  and $m_{R_0}(a)$
is attained.
\end{lemma}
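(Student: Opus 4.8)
The plan is to establish the subadditivity inequality by a scaling-and-concatenation argument, exploiting the dilation invariance of the $L^2$-norm together with the gradient control provided by the geometry of $h$. First I would fix $a\in(0,\rho)$, set $c:=\sqrt{\rho^2-a^2}$, and take two minimizing sequences: $(u_n)\subset\mathcal{U}_{R_0}(a)$ with $E(u_n)\to m_{R_0}(a)$, and $(v_n)\subset\mathcal{U}_{R_0}(c)$ with $E(v_n)\to m_{R_0}(c)$. The naive idea of adding translates $u_n+v_n(\cdot-y_n)$ with $|y_n|\to\infty$ fails here because the Choquard term is nonlocal, so I would instead \emph{rescale} one of the pieces. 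The key observation is that if $w\in\mathcal{U}_{R_0}(b)$ for some $b\le\rho$, then the dilation $w_\tau(x)=\tau^{N/2}w(\tau x)$ satisfies $\|w_\tau\|_2=b$ and $\|\nabla w_\tau\|_2=\tau\|\nabla w\|_2$, so for $\tau\le 1$ it stays in $\mathcal{U}_{R_0}(b)\subset\mathcal{U}_{R_0}(\rho)$; meanwhile all the nonlinear terms pick up positive powers of $\tau$, roughly $\tau^{N+\alpha+2}$, $\tau^{(N+\alpha)/2+2}$, etc. (the exact exponents being exactly those appearing in the computation of $E(u_\tau)$ in the proof of Lemma \ref{lem3.3}), so $E(w_\tau)\to\tfrac12\|\nabla w\|_2^2\cdot\tau^2\to$ its own value as $\tau\to1^-$ and is controlled for $\tau<1$.

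The core step is then: given $\varepsilon>0$, pick $u\in\mathcal{U}_{R_0}(a)$ and $v\in\mathcal{U}_{R_0}(c)$ with $E(u)<m_{R_0}(a)+\varepsilon$ and $E(v)<m_{R_0}(c)+\varepsilon$; choose $\tau<1$ close to $1$ so that the rescalings $u_\tau,v_\tau$ still satisfy $E(u_\tau)<m_{R_0}(a)+2\varepsilon$ and $E(v_\tau)<m_{R_0}(c)+2\varepsilon$ and, crucially, $\|\nabla u_\tau\|_2<R_0$ strictly with room to spare; then translate $v_\tau$ far away, forming $\varphi_n:=u_\tau+(v_\tau)(\cdot-y_n)$ with $|y_n|\to\infty$. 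Since $\|u_\tau\|_2^2+\|v_\tau\|_2^2=a^2+c^2=\rho^2$ and the mixed $L^2$ cross term $\int u_\tau (v_\tau)(\cdot-y_n)\to0$, we get $\|\varphi_n\|_2\to\rho$; rescaling slightly once more (multiply by $\rho/\|\varphi_n\|_2\to1$) lands us exactly in $\mathcal{D}_\rho$. Similarly $\|\nabla\varphi_n\|_2^2\to\|\nabla u_\tau\|_2^2+\|\nabla v_\tau\|_2^2$, and here I would use Lemma \ref{lem3.2}(g3) to guarantee that after the dilation by $\tau$ both gradients are small enough that the sum is still below $R_0^2$ — this is exactly the point where the specific cutoff radius $R_0$ and the monotonicity properties of $h$ enter, and it is the reason the statement is phrased with $\mathcal{U}_{R_0}$ rather than an arbitrary $\mathcal{U}_R$. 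Finally the nonlocal energy splits: $\int(I_\alpha\ast F(\varphi_n))F(\varphi_n)\to\int(I_\alpha\ast F(u_\tau))F(u_\tau)+\int(I_\alpha\ast F(v_\tau))F(v_\tau)$ because the interaction terms, being integrals of $|x-y|^{\alpha-N}$ against well-separated masses, vanish as $|y_n|\to\infty$ (using $(G_1)$, Hölder, and Hardy–Littlewood–Sobolev to dominate $F(\varphi_n)$ in $L^{2N/(N+\alpha)}$). Therefore $\liminf_n E(\varphi_n)=E(u_\tau)+E(v_\tau)<m_{R_0}(a)+m_{R_0}(c)+4\varepsilon$, and since $\varphi_n$ (after the tiny renormalization) is admissible for $m_{R_0}(\rho)$, we conclude $m_{R_0}(\rho)\le m_{R_0}(a)+m_{R_0}(c)$ upon letting $\varepsilon\to0$.

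For the strict inequality, suppose without loss of generality that $m_{R_0}(a)$ is attained by some $u\in\mathcal{U}_{R_0}(a)$ (the case of $m_{R_0}(c)$ being attained is symmetric). Then I would dilate $u$ by $\tau<1$: a direct computation as in Lemma \ref{lem3.3} shows $\tfrac{d}{d\tau}E(u_\tau)\big|_{\tau=1}$ or, more robustly, that $E(u_\tau)<E(u_1)=m_{R_0}(a)$ for $\tau$ slightly less than $1$, because near $\tau=1$ the nonlinear part, which is strictly positive on the minimizer (its positivity follows since $m_{R_0}(a)<-\tfrac12 b^2 S_2^{-(N+\alpha)/N}a^{2(N+\alpha)/N}\le\tfrac12\|\nabla u\|_2^2$ forces the Choquard term to be positive), decreases slower in $\tau$ than the kinetic term — one checks the relevant exponents $N+\alpha+2>2$ and $(N+\alpha)/2+2>2$ make $\varphi(\tau)=E(u_\tau)$ satisfy $\varphi'(1^-)<0$ strictly when the nonlinearity does not vanish. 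Feeding this strictly-smaller value $E(u_\tau)<m_{R_0}(a)$ into the concatenation above gives $m_{R_0}(\rho)\le E(u_\tau)+m_{R_0}(c)<m_{R_0}(a)+m_{R_0}(c)$. I expect the main obstacle to be the bookkeeping in the gradient estimate: one must simultaneously keep $\|\nabla\varphi_n\|_2<R_0$ (which requires $\tau$ not too small and uses (g3)) while making the nonlinear terms small enough in $\tau$ to beat $\varepsilon$ — these two demands pull in opposite directions, and threading them is precisely where Lemma \ref{lem3.2} is indispensable; the nonlocal-interaction-vanishing estimates, by contrast, are routine once the pieces are separated.
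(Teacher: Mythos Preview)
Your concatenation-and-translation strategy is \emph{not} the route the paper takes. The paper never glues two far-apart profiles; instead it proves the single scaling inequality
\[
m_{R_0}(\theta a)\le\theta^2 m_{R_0}(a),\qquad\theta\in[1,\rho/a],
\]
by applying the non-$L^2$-preserving dilation $v(x)=u(\theta^{-2/(N+\alpha)}x)$ to a near-minimizer $u\in\mathcal{U}_{R_0}(a)$. Under this dilation the Choquard term scales exactly like $\theta^2$ while the kinetic term scales like $\theta^{2(N-2)/(N+\alpha)}<\theta^2$, giving $E(v)\le\theta^2 E(u)$ (strictly if $\theta>1$ and $\nabla u\not\equiv 0$). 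The subadditivity and its strict form then follow from this one-parameter inequality by elementary algebra.

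Your plan has a genuine gap in the strict-inequality part. You want to take a minimizer $u$ of $m_{R_0}(a)$ and dilate it via the $L^2$-preserving map $u_\tau(x)=\tau^{N/2}u(\tau x)$ with $\tau<1$ to obtain $E(u_\tau)<m_{R_0}(a)$. But for every $\tau\le 1$ one has $\|u_\tau\|_2=\|u\|_2\le a$ and $\|\nabla u_\tau\|_2=\tau\|\nabla u\|_2<R_0$, so $u_\tau\in\mathcal{U}_{R_0}(a)$ and hence $E(u_\tau)\ge m_{R_0}(a)$ by definition of the infimum. Your derivative heuristic (``the nonlinear part decreases slower than the kinetic term'') therefore cannot yield $E(u_\tau)<m_{R_0}(a)$; at best it is consistent with $\varphi'(1)\le 0$, which points the wrong way for your purpose. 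The paper avoids this precisely by \emph{changing the $L^2$-norm} with its scaling, so that the comparison function lands in the larger set $\mathcal{U}_{R_0}(\theta a)$ rather than back in $\mathcal{U}_{R_0}(a)$.

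A secondary issue is the gradient bookkeeping for the non-strict part. Dilating by $\tau<1$ alone does not force $\tau^2(\|\nabla u\|_2^2+\|\nabla v\|_2^2)<R_0^2$ while keeping $E(u_\tau),E(v_\tau)$ close to the infima; these requirements conflict. What actually works (and is the real content of Lemma~\ref{lem3.2}(g3) here) is that any near-minimizer $u\in\mathcal{U}_{R_0}(a)$ automatically satisfies the sharper bound $\|\nabla u\|_2<R_0 a/\rho$: since $E(u)<-\tfrac12 b^2 S_2^{-(N+\alpha)/N}a^{2(N+\alpha)/N}$ forces $h(a,\|\nabla u\|_2)<0$, while (g3) gives $h(a,s)\ge h(\rho,R_0)=0$ for $s\in[R_0 a/\rho,R_0]$. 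With this bound the sum $\|\nabla u\|_2^2+\|\nabla v\|_2^2<R_0^2(a^2+c^2)/\rho^2=R_0^2$ directly, and no auxiliary dilation is needed. If you repair your argument along these lines the non-strict inequality goes through, but you still need a different idea for strictness.
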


\begin{proof}
We claim that
\begin{equation}\label{e3.3}
 m_{R_0}(\theta a)\leq  \theta^2m_{R_0}(a)
\end{equation}
for every $\theta\in [1, \rho/a]$ and that the inequality is strict if $m_{R_0}(a)$ is attained and $\theta > 1$.

By (\ref{e3.2}), Lemma \ref{lem3.3}, and the definition of $m_{R_0}(a)$, for any fixed
$$\epsilon\in \left(0,-\frac{1}{2}b^2S_2^{-\frac{N+\alpha}{N}}a^{2\frac{N+\alpha}{N}}-m_{R_0}(a)\right),$$
there exists
$u\in \mathcal{U}_{R_0}(a)$ such that
\begin{equation*}
\begin{split}
-\frac{1}{2}b^2S_2^{-\frac{N+\alpha}{N}}a^{2\frac{N+\alpha}{N}}+&h(a,\|\nabla u\|_2)\|\nabla u\|_2^2\\
&\qquad\leq E(u)\leq m_{R_0}(a)+\epsilon<-\frac{1}{2}b^2S_2^{-\frac{N+\alpha}{N}}a^{2\frac{N+\alpha}{N}},
\end{split}
\end{equation*}
which implies that
\begin{equation}\label{e3.4}
h(a,\|\nabla u\|_2)<0.
\end{equation}
If $\|\nabla u\|_2\geq {R_0a}/{\rho}$, then from Lemma \ref{lem3.2} (g3) with $a_1 =\rho$, $a_2 = a$, $t = R_0$, and $s = \|\nabla u\|_2$, we obtain
$h(a,\|\nabla u\|_2)\geq h(\rho,R_0)=0$, which contradicts  (\ref{e3.4}). Therefore $\|\nabla u\|_2< {R_0a}/{\rho}$.
Define $v:=u(\theta^{-\frac{2}{N+\alpha}}x)$. Direct calculations give that
\begin{equation*}
\|v\|_2=\theta^{\frac{N}{N+\alpha}}\|u\|_2\leq  \theta\|u\|_2\leq \theta a,
\end{equation*}
\begin{equation*}
\|\nabla v\|_2=\theta^{\frac{N-2}{N+\alpha}}\|\nabla u\|_2\leq  \theta \frac{R_0a}{\rho}\leq R_0,
\end{equation*}
and
\begin{equation*}
E(v)=\theta^2\left(\frac{1}{2}\theta^{-\frac{2(2+\alpha)}{N+\alpha}}\|\nabla  u\|_2^2-\frac{1}{2}\int_{\mathbb{R}^N}(I_\alpha\ast F(u))F(u)dx\right).
\end{equation*}
Then $v\in \mathcal{U}_{R_0}(\theta a)$ and
\begin{equation*}
m_{R_0}(\theta  a)\leq E(v)\leq \theta^2E(u)\leq  \theta^2(m_{R_0}(a)+\epsilon),
\end{equation*}
which implies (\ref{e3.3}) by the arbitrariness of $\epsilon$.  In addition, if $m_{R_0}(a)$ is attained and $\theta > 1$, we can take $u\in \mathcal{U}_{R_0}(a)$ as the minimizer and, repeating
the previous argument, we have
\begin{equation}\label{e3.5}
m_{R_0}(\theta  a)\leq E(v)< \theta^2 E(u)=\theta^2m_{R_0}(a).
\end{equation}
This proves the claim.

If $a^2>\rho^2-a^2$, we obtain
\begin{equation*}
m_{R_0}(\rho)\stackrel{(1)}{\leq}\frac{\rho^2}{a^2}m_{R_0}(a)=m_{R_0}(a)+\frac{\rho^2-a^2}{a^2}m_{R_0}(a)\stackrel{(2)}{\leq}m_{R_0}(a)+m_{R_0}(\sqrt{\rho^2-a^2}),
\end{equation*}
where the inequality (1) (respectively, (2)) is strict if $m_{R_0}(a)$ (respectively, $m_{R_0}(\sqrt{\rho^2-a^2})$) is attained.
Analogously, if $a^2<\rho^2-a^2$, we obtain
\begin{equation*}
\begin{split}
m_{R_0}(\rho)&\stackrel{(3)}{\leq}\frac{\rho^2}{\rho^2-a^2}m_{R_0}(\sqrt{\rho^2-a^2})
=m_{R_0}(\sqrt{\rho^2-a^2})+\frac{a^2}{\rho^2-a^2}m_{R_0}(\sqrt{\rho^2-a^2})\\
&\stackrel{(4)}{\leq}m_{R_0}(\sqrt{\rho^2-a^2})+m_{R_0}(a),
\end{split}
\end{equation*}
where the inequality (3) (respectively, (4)) is strict if $m_{R_0}(\sqrt{\rho^2-a^2})$
(respectively, $m_{R_0}(a)$) is attained.
Finally, if $a^2=\rho^2-a^2$ (i.e. $\sqrt{2}a = \rho$), then
\begin{equation*}
m_{R_0}(\rho)=m_{R_0}(\sqrt{2}a)\stackrel{(5)}{\leq}  2m_{R_0}(a)
\end{equation*}
follows from (\ref{e3.3}). And by (\ref{e3.5}) the inequality (5) is strict if $m_{R_0}(a)$ is attained.
\end{proof}

We obtain the monotonicity of $m_R(a)$.

\begin{remark}\label{rmk3.2}
(1) By the definition of $m_R(a)$ and $\mathcal{U}_R(a)$, we obtain that
\begin{equation*}
m_{R}(a)\leq m_{R}(b),\ \forall a>b>0, R>0.
\end{equation*}
(2) By Lemmas \ref{lem3.3} and \ref{lem3.4}, if ($G_1$), ($G_2$), and (\ref{e3.1}) hold, then
\begin{equation*}
m_{R_0}(\rho)< m_{R_0}(a),\ \forall a\in (0,\rho).
\end{equation*}
\end{remark}

The next lemma shows the relative compactness in $L^p(\mathbb{R}^N)$ of minimizing sequences.

\begin{lemma}\label{lem3.6}
Assume that ($G_1$)-($G_3$) and (\ref{e3.1}) hold. If $\rho_n\to \rho$, $R_n\to R_0$, and $\{u_n\}\subset H^1(\mathbb{R}^N)$ is such
that $u_n\in \mathcal{U}_{R_n}(\rho_n)$ for every $n\in \mathbb{N}$ and $E(u_n)\to m_{R_0}(\rho)$, then there exist $\{y_n\}\subset \mathbb{R}^N$ and $\bar{u}\in \mathcal{D}_{\rho}\backslash\{0\}$  such that $\|\nabla \bar{u}\|_2\leq R_0$ and $u_n(\cdot+y_n)\to \bar{u}$ in $L^p(\mathbb{R}^N)$ for every $p\in [2,2^*)$.
\end{lemma}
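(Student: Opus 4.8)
The plan is to run a concentration--compactness argument of Lions type, the crux being to rule out vanishing in the Hardy--Littlewood--Sobolev lower critical regime $b=1$. Since $\|u_n\|_2\le\rho_n$ and $\|\nabla u_n\|_2<R_n$ are both bounded, $\{u_n\}$ is bounded in $H^1(\mathbb{R}^N)$, so, along a subsequence, $u_n\rightharpoonup u_*$ in $H^1(\mathbb{R}^N)$. I would then dichotomise with Lions' lemma: either (vanishing) $\sup_{y\in\mathbb{R}^N}\int_{B(y,1)}|u_n|^2\,dx\to0$, or (non-vanishing) there exist $\delta>0$ and $\{y_n\}\subset\mathbb{R}^N$ with $\liminf_n\int_{B(y_n,1)}|u_n|^2\,dx\ge\delta$. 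To exclude vanishing, note it forces $u_n\to0$ in $L^p(\mathbb{R}^N)$ for every $p\in(2,2^*)$; then, using the Hardy--Littlewood--Sobolev bound $\int(I_\alpha\ast|u_n|^{\frac{N+\alpha}{N}})|u_n|^{\frac{N+\alpha}{N}}\,dx\le S_2^{-\frac{N+\alpha}{N}}\|u_n\|_2^{2\frac{N+\alpha}{N}}$, the growth estimate (\ref{e3.13}), the hypothesis $(G_3)$, and the fact that $R_0$ was chosen small enough that the gradient term controls the upper critical part (this is exactly where the delicate estimate of the paper is needed), one obtains $\liminf_n E(u_n)\ge-\tfrac12 b^2 S_2^{-\frac{N+\alpha}{N}}\rho^{2\frac{N+\alpha}{N}}$, which contradicts $E(u_n)\to m_{R_0}(\rho)<-\tfrac12 b^2 S_2^{-\frac{N+\alpha}{N}}\rho^{2\frac{N+\alpha}{N}}$ from Lemma~\ref{lem3.3}. \emph{This is the step I expect to be the main obstacle}: for $b=1$ the quantity $\int(I_\alpha\ast|u_n|^{\frac{N+\alpha}{N}})|u_n|^{\frac{N+\alpha}{N}}\,dx$ is invariant under the mass-preserving rescalings $u_\tau(x)=\tau^{N/2}u(\tau x)$ that themselves produce vanishing, so vanishing does not kill this term, and the \emph{strict} inequality in Lemma~\ref{lem3.3} must be exploited.

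Hence non-vanishing holds. Put $\tilde u_n:=u_n(\cdot+y_n)$; along a subsequence $\tilde u_n\rightharpoonup\bar u$ in $H^1(\mathbb{R}^N)$, and by Rellich--Kondrachov on $B(0,1)$ we get $\int_{B(0,1)}|\bar u|^2\,dx=\lim_n\int_{B(y_n,1)}|u_n|^2\,dx\ge\delta>0$, so $\bar u\ne0$. Weak lower semicontinuity yields $\|\bar u\|_2\le\rho$, i.e.\ $\bar u\in\mathcal{D}_\rho$, and $\|\nabla\bar u\|_2\le R_0$. In fact $\|\nabla\bar u\|_2<R_0$: otherwise $\|\nabla\tilde u_n\|_2\to R_0$, and since $h(\rho_n,\|\nabla\tilde u_n\|_2)\to h(\rho,R_0)=0$, Lemma~\ref{lem3.1} (applied with the radius $\rho_n$) forces $\liminf_n E(\tilde u_n)\ge-\tfrac12 b^2 S_2^{-\frac{N+\alpha}{N}}\rho^{2\frac{N+\alpha}{N}}$, again contradicting Lemma~\ref{lem3.3}. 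Note also $\|\nabla\bar u\|_2>0$, as $\bar u\in H^1(\mathbb{R}^N)\setminus\{0\}$ cannot be constant.

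Finally I would upgrade weak to strong convergence. Let $v_n:=\tilde u_n-\bar u\rightharpoonup0$ in $H^1(\mathbb{R}^N)$ and $\beta:=\lim_n\|v_n\|_2$ (subsequence). The Brezis--Lieb lemma, together with its version for the Choquard-type nonlinearity (valid under (\ref{e3.13})), gives $\|\tilde u_n\|_2^2=\|\bar u\|_2^2+\|v_n\|_2^2+o(1)$, $\|\nabla\tilde u_n\|_2^2=\|\nabla\bar u\|_2^2+\|\nabla v_n\|_2^2+o(1)$ and $E(\tilde u_n)=E(\bar u)+E(v_n)+o(1)$. Since $\|\nabla\bar u\|_2<R_0$ and $\|\nabla v_n\|_2^2=\|\nabla\tilde u_n\|_2^2-\|\nabla\bar u\|_2^2+o(1)<R_0^2$ for $n$ large, we have $\bar u\in\mathcal{U}_{R_0}(\|\bar u\|_2)$ and $v_n\in\mathcal{U}_{R_0}(\|v_n\|_2)$, so $E(\bar u)\ge m_{R_0}(\|\bar u\|_2)$ and $E(v_n)\ge m_{R_0}(\|v_n\|_2)$. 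If $\beta>0$, then $a:=\|\bar u\|_2\in(0,\rho)$, and applying the vanishing-exclusion estimate (and, if necessary, a further step of concentration--compactness) to $\{v_n\}$ one gets $\liminf_n E(v_n)\ge m_{R_0}(\sqrt{\rho^2-a^2})$; combined with the splitting identity and the monotonicity of $m_{R_0}$ (Remark~\ref{rmk3.2}(1)), this gives $m_{R_0}(\rho)\ge m_{R_0}(a)+m_{R_0}(\sqrt{\rho^2-a^2})\ge m_{R_0}(\rho)$, so all inequalities are equalities; in particular $E(\bar u)=m_{R_0}(a)$, so $m_{R_0}(a)$ is attained and Lemma~\ref{lem3.4} gives the strict inequality $m_{R_0}(\rho)<m_{R_0}(a)+m_{R_0}(\sqrt{\rho^2-a^2})$ --- a contradiction. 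Therefore $\beta=0$, i.e.\ $v_n\to0$ in $L^2(\mathbb{R}^N)$; interpolating with the $H^1$-bounded (hence $L^{2^*}$-bounded) sequence $\{v_n\}$ yields $v_n\to0$ in $L^p(\mathbb{R}^N)$ for every $p\in[2,2^*)$, that is $u_n(\cdot+y_n)\to\bar u$ in $L^p(\mathbb{R}^N)$ for $p\in[2,2^*)$, with $\bar u\in\mathcal{D}_\rho\setminus\{0\}$ and $\|\nabla\bar u\|_2\le R_0$.
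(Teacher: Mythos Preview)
Your overall strategy --- Lions' dichotomy, exclusion of vanishing via the strict upper bound of Lemma~\ref{lem3.3}, then Brezis--Lieb splitting combined with the strict subadditivity of Lemma~\ref{lem3.4} --- is exactly the paper's route, and the extra step you insert proving $\|\nabla\bar u\|_2<R_0$ is correct (though only $\le R_0$ is claimed in the lemma; the paper defers the strict inequality to Theorem~\ref{thm3.1}).

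On the vanishing step, what you leave implicit is how to kill the \emph{cross term} $\int_{\mathbb{R}^N}(I_\alpha\ast|u_n|^{\frac{N+\alpha}{N}})|u_n|^{\frac{N+\alpha}{N-2}}dx$ once $u_n\to0$ in every $L^q$, $q\in(2,2^*)$. Under the natural HLS exponents neither factor lands in a subcritical Lebesgue space; the paper perturbs the H\"older pair (taking $p_1^{-1}=\tfrac{N+\alpha}{2N}-\eta$, $q_1^{-1}=\tfrac{N+\alpha}{2N}+\eta$ with $\eta>0$ small) so that $\tfrac{N+\alpha}{N}p_1,\ \tfrac{N+\alpha}{N-2}q_1\in(2,2^*)$, and then this term is $o_n(1)$ (this is~(\ref{e3.8})). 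After that, the only surviving nonlocal pieces are the pure lower--critical one (giving $-\tfrac12 b^2S_2^{-(N+\alpha)/N}\rho_n^{2(N+\alpha)/N}-C\epsilon$) and the pure upper--critical one; the identity $h(\rho,R_0)=0$ is then used, as you anticipate, to see that $\tfrac12-C_2R_0^{2\frac{N+\alpha}{N-2}-2}=C_1\rho^{2\frac{N+\alpha}{N}}R_0^{-2}>0$, so the gradient dominates.

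There is, however, a genuine gap in your dichotomy step. You write $E(v_n)\ge m_{R_0}(\|v_n\|_2)$ and then want $\liminf_nE(v_n)\ge m_{R_0}(\beta)$ (with $\beta=\lim_n\|v_n\|_2$). But $\|v_n\|_2$ may overshoot $\beta$, and since $a\mapsto m_{R_0}(a)$ is non\-increasing this places $m_{R_0}(\|v_n\|_2)$ \emph{below} $m_{R_0}(\beta)$; right--continuity of $m_{R_0}$ has not been established, and ``a further step of concentration--compactness on $\{v_n\}$'' does not by itself furnish the required inequality. The paper closes this gap by a normalisation: set $\tilde v_n:=v_n$ when $\|v_n\|_2\le\beta$ and $\tilde v_n:=(\beta/\|v_n\|_2)\,v_n$ otherwise, so that $\tilde v_n\in\mathcal U_{R_0}(\beta)$ exactly, and then shows, via the mean--value theorem on $F$ together with $(G_1)$ and $\|v_n\|_2\to\beta$, that $E(\tilde v_n)-E(v_n)\to0$. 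This yields $\lim_nE(v_n)=\lim_nE(\tilde v_n)\ge m_{R_0}(\beta)$ directly, after which your chain
\[
m_{R_0}(\rho)=E(\bar u)+\lim_nE(v_n)\ge m_{R_0}(a)+m_{R_0}(\beta)\ge m_{R_0}(a)+m_{R_0}\bigl(\sqrt{\rho^2-a^2}\bigr)\ge m_{R_0}(\rho)
\]
and the ensuing contradiction go through verbatim.
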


\begin{proof}
First of all, since $\{u_n\}$ is bounded in $H^1(\mathbb{R}^N)$, it suffices to prove the statement for $p=2$ in virtue
of the interpolation inequality and the Sobolev embedding.

Assume by contradiction that
\begin{equation*}
\lim_{n\to  +\infty}\sup_{y\in \mathbb{R}^N}\int_{B(y,1)}|u_n|^2dx=0.
\end{equation*}
Then, from Lions' lemma (\cite{Lions1984}, Lemma I.1), we obtain that
\begin{equation}\label{e3.6}
\lim_{n\to  +\infty} \|u_n\|_q=0, \ \forall q\in (2,2^*).
\end{equation}
Fix $r\in (\frac{N+\alpha}{N},\frac{N+\alpha}{N-2})$ and $\epsilon>0$.  From (\ref{e3.13}) and ($G_3$), there exists $c=c(r,\epsilon,C_0)>0$ such that for every $t\in \mathbb{R}$,
\begin{equation}\label{e3.7}
G(t)\leq \epsilon|t|^{\frac{N+\alpha}{N}}+c|t|^r+C_0|t|^{\frac{N+\alpha}{N-2}}.
\end{equation}
Choose $\eta>0$ small enough such that
\begin{equation*}
\frac{1}{p_1}:=\frac{N+\alpha}{2N}-\eta<1,\ \frac{1}{q_1}:=\frac{N+\alpha}{2N}+\eta<1,
\end{equation*}
\begin{equation*}
\frac{N+\alpha}{N}p_1\in (2,2^*),\ \frac{N+\alpha}{N-2}q_1\in (2,2^*),
\end{equation*}
then $\frac{1}{p_1}+\frac{N-\alpha}{N}+\frac{1}{q_1}=2$, and thus by Lemma \ref{lem HLS} and (\ref{e3.6}), we obtain
\begin{equation}\label{e3.8}
\begin{split}
\int_{\mathbb{R}^N}(I_\alpha\ast|u_n|^{\frac{N+\alpha}{N}})|u_n|^{\frac{N+\alpha}{N-2}}dx&\leq \||u_n|^{\frac{N+\alpha}{N}}\|_{p_1}\||u_n|^{\frac{N+\alpha}{N-2}}\|_{q_1}\\
&=\|u_n\|_{\frac{N+\alpha}{N}p_1}^{\frac{N+\alpha}{N}}\|u_n\|_{\frac{N+\alpha}{N-2}q_1}^{\frac{N+\alpha}{N-2}}=o_n(1).
\end{split}
\end{equation}
Using Lemma \ref{lem HLS} and   (\ref{e3.6})-(\ref{e3.8}), similarly  to (\ref{e3.11}), there exists a constant $C>0$ independent of  $\epsilon$  and $n$ such that
\begin{equation*}
\begin{split}
E(u_n)&\geq -\frac{1}{2}b^2S_2^{-\frac{N+\alpha}{N}}\| u_n\|_2^{2\frac{N+\alpha}{N}}+\frac{1}{2}\|\nabla u_n\|_2^2 \\
&\qquad-\frac{1}{2}C_0^2S_1^{-\frac{N+\alpha}{N-2}}\|\nabla u_n\|_2^{2\frac{N+\alpha}{N-2}}-C\epsilon+o_n(1)\\
&\geq -\frac{1}{2}b^2S_2^{-\frac{N+\alpha}{N}}\|u_n\|_2^{2\frac{N+\alpha}{N}}+\|\nabla u_n\|_2^2\left(\frac{1}{2}-C_2\|\nabla u_n\|_2^{2\frac{N+\alpha}{N-2}-2}\right)-C\epsilon+o_n(1)\\
&\geq -\frac{1}{2}b^2S_2^{-\frac{N+\alpha}{N}}\rho_n^{2\frac{N+\alpha}{N}}+\|\nabla u_n\|_2^2\left(\frac{1}{2}-C_2R_n^{2\frac{N+\alpha}{N-2}-2}\right)-C\epsilon+o_n(1).
\end{split}
\end{equation*}
This combined with $h(\rho,R_0)=0$ gives that
\begin{equation*}
\begin{split}
m_{R_0}(\rho)&=\lim_{n\to+\infty}E(u_n)\\
&\geq -\frac{1}{2}b^2S_2^{-\frac{N+\alpha}{N}}\rho^{2\frac{N+\alpha}{N}}+\left(\frac{1}{2}-C_2R_0^{2\frac{N+\alpha}{N-2}-2}\right)\liminf_{n\to+\infty}\|\nabla u_n\|_2^2-C\epsilon\\
&= -\frac{1}{2}b^2S_2^{-\frac{N+\alpha}{N}}\rho^{2\frac{N+\alpha}{N}}+C_1\rho^{2\frac{N+\alpha}{N}}R_0^{-2}\liminf_{n\to+\infty}\|\nabla u_n\|_2^2-C\epsilon\\
&\geq -\frac{1}{2}b^2S_2^{-\frac{N+\alpha}{N}}\rho^{2\frac{N+\alpha}{N}}-C\epsilon,
\end{split}
\end{equation*}
which implies $m_{R_0}(\rho)\geq  -\frac{1}{2}b^2S_2^{-\frac{N+\alpha}{N}}\rho^{2\frac{N+\alpha}{N}}$ by the arbitrariness of $\epsilon$. That contradicts Lemma \ref{lem3.3}. Hence there exists $\{y_n\}\subset \mathbb{R}^N$ such that
\begin{equation*}
\limsup_{n\to+\infty}\int_{B(0,1)}|u_n(x+y_n)|^2dx>0.
\end{equation*}

As a consequence, there exists $\bar{u}\in \mathcal{D}_\rho\backslash\{0\}$ with $\|\nabla \bar{u}\|_2\leq R_0$ such that, setting $v_n(x):= u_n(x+y_n)-\bar{u}(x)$,
up to a subsequence $v_n\rightharpoonup0$   in $H^1(\mathbb{R}^N)$ and $v_n(x)\to 0$ for a.e. $x\in \mathbb{R}^N$.

In the following, we just need to prove $\|v_n\|_2\to 0$ as $n\to +\infty$. Assume by contradiction that $\beta_1:=\lim_{n\to+\infty}\|v_n\|_2>0$ along a subsequence. Denote $\beta_2:=\|\bar{u}\|_2$.  From the Brezis-Lieb Lemma
and the weak convergence in $H^1(\mathbb{R}^N)$ we get
\begin{equation*}
\begin{split}
\lim_{n\to+\infty}(E(u_n)-E(v_n))&=E(\bar{u}),\\
\lim_{n\to+\infty}(\|u_n\|_2^2-\|v_n\|_2^2) &=\|\bar{u}\|_2^2,\\
 \lim_{n\to+\infty}(\|\nabla u_n\|_2^2-\|\nabla v_n\|_2^2) &=\|\nabla \bar{u}\|_2^2,
\end{split}
\end{equation*}
and so, in particular, $\beta_2^2\leq \rho^2-\beta_1^2$ and $\|\nabla v_n\|_2<R_0$ for $n\gg1$. Now, for every $n$ define
\begin{equation*}
\tilde{v}_n:=\left\{
\begin{array}{ll}
v_n,&\text{if}\ \|v_n\|_2\leq \beta_1,\\
\frac{\beta_1}{\|v_n\|_2}v_n,&\text{if}\ \|v_n\|_2> \beta_1.
\end{array}
\right.
\end{equation*}
Then $\tilde{v}_n\in \mathcal{U}_{R_0}(\beta_1)$. Set $a_n:=\frac{\beta_1}{\|v_n\|_2}$. Direct calculations give that
\begin{equation*}
\begin{split}
E(\tilde{v}_n)-E(v_n)&=\frac{1}{2}\|\nabla \tilde{v}_n\|_2^2-\frac{1}{2}\|\nabla v_n\|_2^2\\
&\qquad+\frac{1}{2}\int_{\mathbb{R}^N}(I_\alpha\ast F(v_n))F(v_n)dx-\frac{1}{2}\int_{\mathbb{R}^N}(I_\alpha\ast F(\tilde{v}_n))F(\tilde{v}_n)dx\\
&=\frac{1}{2}\|\nabla \tilde{v}_n\|_2^2-\frac{1}{2}\|\nabla v_n\|_2^2\\
&\qquad+\frac{1}{2}\int_{\mathbb{R}^N}(I_\alpha\ast (F(v_n)+F(\tilde{v}_n)))(F(v_n)-F(\tilde{v}_n))dx
\end{split}
\end{equation*}
 and
 \begin{equation*}
\begin{split}
&\int_{\mathbb{R}^N}(I_\alpha\ast (F(v_n)+F(\tilde{v}_n)))(F(v_n)-F(\tilde{v}_n))dx\\
&\quad=\int_{\mathbb{R}^N}(I_\alpha\ast (F(v_n)+F(\tilde{v}_n)))\left(\int_{0}^{1}f(\tilde{v}_n+\theta(v_n-\tilde{v}_n))(v_n-\tilde{v}_n)d\theta\right)dx\\
&\quad=(1-a_n)\int_{\mathbb{R}^N}(I_\alpha\ast (F(v_n)+F(\tilde{v}_n)))\left(\int_{0}^{1}f(\tilde{v}_n+\theta(v_n-\tilde{v}_n))v_nd\theta\right)dx.
\end{split}
\end{equation*}
Since $|\tilde{v}_n+\theta(v_n-\tilde{v}_n)|\leq 4|v_n|$,
using ($G_1$) and $a_n\to 1$ as $n\to+\infty$, we obtain that
\begin{equation}\label{e3.9}
\lim_{n\to+\infty}(E(v_n)-E(\tilde{v}_n))=0.
\end{equation}
Hence, by Lemma \ref{lem3.4}, Remark \ref{rmk3.2} (1), (\ref{e3.9}) and the definition of $m_R(a)$, we obtain that
\begin{equation}\label{e3.10}
\begin{split}
m_{R_0}(\rho)&=\lim_{n\to+\infty}E(u_n)=E(\bar{u})+\lim_{n\to+\infty}E(v_n)=E(\bar{u})+\lim_{n\to+\infty}E(\tilde{v}_n)\\
&\geq  m_{R_0}(\beta_2)+m_{R_0}(\beta_1)\geq m_{R_0}(\beta_2)+m_{R_0}(\sqrt{\rho^2-\beta_2^2})\geq  m_{R_0}(\rho).
\end{split}
\end{equation}
This implies that $E(\bar{u})=m_{R_0}(\beta_2)$ and so, again from Lemma \ref{lem3.4},
\begin{equation*}
m_{R_0}(\beta_2)+m_{R_0}(\sqrt{\rho^2-\beta_2^2})> m_{R_0}(\rho),
\end{equation*}
which contradicts (\ref{e3.10}). So we must have $\|v_n\|_2\to 0$ as $n\to +\infty$ and  the proof is complete.
\end{proof}

Now we  prove  the following result.

\begin{theorem}\label{thm3.1}
Assume that ($G_1$)-($G_3$) and (\ref{e3.1}) hold. Then the following results are right.\\
(1) $m_{R_0}(\rho)$ is attained.\\
(2) If $\bar{u}\in \mathcal{U}_{R_0}(\rho)$ such that $E(\bar{u})=m_{R_0}(\rho)$, then $\bar{u}\in \mathcal{S}_\rho$ and there exists $\lambda_{\bar{u}}>0$, such that $(\bar{u},\lambda_{\bar{u}})$ is a solution to (\ref{e1.3}).
\end{theorem}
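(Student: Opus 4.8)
\textbf{Proof proposal for Theorem \ref{thm3.1}.}

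The plan is to run a concentration–compactness argument on a minimizing sequence for $m_{R_0}(\rho)$, using Lemma \ref{lem3.6} as the compactness engine, and then to upgrade the limit from a minimizer over $\mathcal{U}_{R_0}(\rho)$ to an honest normalized solution via the monotonicity recorded in Remark \ref{rmk3.2} (2) and a Lagrange-multiplier argument.

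For part (1), I would take a sequence $\{u_n\}\subset\mathcal{U}_{R_0}(\rho)$ with $E(u_n)\to m_{R_0}(\rho)$. Since $\|u_n\|_2\le\rho$ and $\|\nabla u_n\|_2<R_0$, the sequence is bounded in $H^1(\mathbb{R}^N)$, so applying Lemma \ref{lem3.6} with $\rho_n\equiv\rho$, $R_n\equiv R_0$ yields translations $y_n$ and $\bar u\in\mathcal{D}_\rho\setminus\{0\}$ with $\|\nabla\bar u\|_2\le R_0$ and $u_n(\cdot+y_n)\to\bar u$ in $L^p$ for all $p\in[2,2^*)$. Writing $w_n:=u_n(\cdot+y_n)$ (which has the same energy by translation invariance) and $v_n:=w_n-\bar u$, I would use ($G_1$), the Hardy–Littlewood–Sobolev inequality and the $L^p$-convergence of $v_n\to 0$ (exactly the estimates already used to derive \eqref{e3.11} and \eqref{e3.8}) to conclude that the nonlocal term of $w_n$ converges to that of $\bar u$; combined with weak lower semicontinuity of $\|\nabla\cdot\|_2$ this gives
\begin{equation*}
E(\bar u)\le\liminf_{n\to+\infty}E(w_n)=m_{R_0}(\rho).
\end{equation*}
Since $\bar u\in\mathcal{U}_{R_0}(\rho)$ (its gradient norm is $\le R_0$, and if it equalled $R_0$ the strict-boundary estimate of Remark \ref{rmk3.1} would already force $E(\bar u)>m_{R_0}(\rho)$, a contradiction — so in fact $\|\nabla\bar u\|_2<R_0$), the reverse inequality $E(\bar u)\ge m_{R_0}(\rho)$ is automatic, and $\bar u$ attains $m_{R_0}(\rho)$.

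For part (2), suppose $\bar u\in\mathcal{U}_{R_0}(\rho)$ with $E(\bar u)=m_{R_0}(\rho)$ and set $\beta:=\|\bar u\|_2\le\rho$. If $\beta<\rho$, then $E(\bar u)\ge m_{R_0}(\beta)$ by definition of the infimum, while Remark \ref{rmk3.2} (2) gives $m_{R_0}(\rho)<m_{R_0}(\beta)$, so $m_{R_0}(\rho)=E(\bar u)\ge m_{R_0}(\beta)>m_{R_0}(\rho)$, a contradiction; hence $\|\bar u\|_2=\rho$, i.e. $\bar u\in\mathcal{S}_\rho$. Now $\bar u$ is an interior minimizer of $E$ over $\mathcal{D}_\rho$ relative to the open constraint $\|\nabla u\|_2<R_0$, so the gradient constraint is inactive and $\bar u$ minimizes $E$ on $\mathcal{S}_\rho$ locally; the Lagrange multiplier rule yields $\lambda_{\bar u}\in\mathbb{R}$ with $E'(\bar u)+\lambda_{\bar u}\bar u=0$ in $H^{-1}$, which is precisely the first equation of \eqref{e1.3}. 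It remains to check $\lambda_{\bar u}>0$: testing the equation with $\bar u$ gives $\|\nabla\bar u\|_2^2+\lambda_{\bar u}\rho^2=\int_{\mathbb{R}^N}(I_\alpha\ast F(\bar u))f(\bar u)\bar u\,dx$, so I would combine this with a Pohozaev-type identity for \eqref{e1.3} (obtained from the scaling $\bar u_\tau$, using that $\tau\mapsto E(\bar u_\tau)$ has a minimum-type critical behaviour at $\tau=1$ forced by $\|\nabla\bar u\|_2<R_0$ and the shape of $h$) to eliminate the nonlocal term and deduce $\lambda_{\bar u}\rho^2=$ (a positive multiple of $\|\nabla\bar u\|_2^2$) plus a nonnegative remainder; since $\bar u\neq 0$, this gives $\lambda_{\bar u}>0$.

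The main obstacle I anticipate is the sign of the Lagrange multiplier: unlike the local case in \cite{Bieganowski-dAvenia-Schino 2024}, here the nonlocal term scales as $\tau^{N+\alpha+2}$ (for the $G$–$G$ interaction) and $\tau^{(N+\alpha)/2+2}$ (for the $b$–$G$ cross term), so the Pohozaev and Nehari identities are genuinely different and one must track these exponents carefully to see that the combination controlling $\lambda_{\bar u}\rho^2$ is strictly positive; the fact that $\bar u$ sits strictly inside the ball $\{\|\nabla u\|_2<R_0\}$, where by Lemma \ref{lem3.2} (g2) $h(\rho,\cdot)>0$, is what should ultimately pin down the sign.
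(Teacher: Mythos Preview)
Your argument for $\bar u\in\mathcal{S}_\rho$ in part (2) matches the paper exactly, but there are two genuine gaps elsewhere.

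\medskip
\textbf{Part (1): the nonlocal term need not converge.} You claim that $(G_1)$, Hardy--Littlewood--Sobolev and the $L^p$-convergence of $v_n\to 0$ for $p\in[2,2^*)$ force $\int_{\mathbb{R}^N}(I_\alpha\ast F(w_n))F(w_n)\,dx\to\int_{\mathbb{R}^N}(I_\alpha\ast F(\bar u))F(\bar u)\,dx$. This fails because $(G_1)$ allows $|G(t)|$ to contain a term of order $|t|^{\frac{N+\alpha}{N-2}}$, which is upper HLS critical: the corresponding piece $\int_{\mathbb{R}^N}(I_\alpha\ast|v_n|^{\frac{N+\alpha}{N-2}})|v_n|^{\frac{N+\alpha}{N-2}}\,dx$ is controlled by $\|\nabla v_n\|_2^{2\frac{N+\alpha}{N-2}}$ (via $S_1$), not by any $L^p$-norm with $p<2^*$, so it need not vanish. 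Consequently, weak lower semicontinuity of $E$ along $w_n$ is not available, and your inequality $E(\bar u)\le\liminf E(w_n)$ is unjustified. The paper resolves this differently: it first records $E(\bar u)\ge m_{R_0}(\rho)$ from Remark~\ref{rmk3.1}, then uses the Brezis--Lieb splitting $m_{R_0}(\rho)=E(\bar u)+\lim_n E(v_n)$ to get $\lim_n E(v_n)\le 0$, and finally feeds $\|v_n\|_2\to 0$ into the lower bound \eqref{e3.11} (together with $h(\rho,R_0)=0$) to force $\|\nabla v_n\|_2\to 0$, i.e.\ strong $H^1$-convergence. Only then does the critical piece disappear.

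\medskip
\textbf{Part (2): the sign of $\lambda_{\bar u}$.} Your plan to combine Nehari and Poho\v{z}aev identities to express $\lambda_{\bar u}\rho^2$ as ``a positive multiple of $\|\nabla\bar u\|_2^2$ plus a nonnegative remainder'' does not work under the present hypotheses: the combination one obtains is $\lambda_{\bar u}\rho^2=\tfrac{1}{2}\int_{\mathbb{R}^N}(I_\alpha\ast F(\bar u))\big[(N+\alpha)F(\bar u)-(N-2)f(\bar u)\bar u\big]\,dx$, whose sign is not controlled by $(G_1)$--$(G_3)$. Moreover, your closing remark that ``$\bar u$ sits strictly inside the ball $\{\|\nabla u\|_2<R_0\}$, where by Lemma~\ref{lem3.2}~(g2) $h(\rho,\cdot)>0$'' is backwards: Lemma~\ref{lem3.2}~(g2) says $h(\rho,t)<0$ for $t\in(0,R_0)$. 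The paper's argument is much shorter and uses only the Poho\v{z}aev identity: if $\lambda_{\bar u}\le 0$, then $\frac{N-2}{2}\|\nabla\bar u\|_2^2\ge\frac{N+\alpha}{2}\int_{\mathbb{R}^N}(I_\alpha\ast F(\bar u))F(\bar u)\,dx$, whence $E(\bar u)\ge\frac{2+\alpha}{2(N+\alpha)}\|\nabla\bar u\|_2^2\ge 0$, contradicting $E(\bar u)=m_{R_0}(\rho)<-\frac{1}{2}b^2S_2^{-\frac{N+\alpha}{N}}\rho^{2\frac{N+\alpha}{N}}\le 0$ from Lemma~\ref{lem3.3}.
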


\begin{proof}
By Lemma \ref{lem3.3}, we can choose $\{u_n\}\subset \mathcal{U}_{R_0}(\rho)$ such that $E(u_n)\to m_{R_0}(\rho)$. By Lemma \ref{lem3.6},  there exists $\bar{u}\in \mathcal{D}_{\rho}\backslash\{0\}$  with $\|\nabla \bar{u}\|_2\leq R_0$ such that, replacing $u_n(\cdot+y_n)$ with $u_n$ and up to a subsequence, $u_n\rightharpoonup \bar{u}$  in $H^1(\mathbb{R}^N)$ and $u_n\to \bar{u}$ in $L^2(\mathbb{R}^N)$ and a.e. in $\mathbb{R}^N$, and, by Remark \ref{rmk3.1} and the definition of $m_{R_0}(\rho)$, $E(\bar{u})\geq m_{R_0}(\rho)$.

Denote $v_n:=u_n-\bar{u}$. From the weak convergence,
\begin{equation*}
\lim_{n\to+\infty}(\|\nabla u_n\|_2^2-\|\nabla v_n\|_2^2)=\|\nabla \bar{u}\|_2^2>0,
\end{equation*}
thus $\|\nabla v_n\|_2<R_0$ for $n\gg1$. Moreover, from the Bresiz-Lieb Lemma,
\begin{equation*}
m_{R_0}(\rho)=\lim_{n\to+\infty}E(u_n)=\lim_{n\to+\infty}E(v_n)+E(\bar{u})\geq \lim_{n\to+\infty}E(v_n)+m_{R_0}(\rho),
\end{equation*}
which implies that $\lim_{n\to+\infty}E(v_n)\leq 0$. This, (\ref{e3.11}), $h(\rho,R_0)=0$ and $\lim_{n\to+\infty}\|v_n\|_2=0$ give that
\begin{equation*}
\begin{split}
0\geq \lim_{n\to+\infty}E(v_n)&\geq \limsup_{n\to+\infty}\left(\frac{1}{2}\|\nabla v_n\|_2^2-\frac{1}{2}C_0^2S_1^{-\frac{N+\alpha}{N-2}}\|\nabla v_n\|_2^{2\frac{N+\alpha}{N-2}} \right)\\
&\geq C_1\rho^{2\frac{N+\alpha}{N}}R_0^{-2}\limsup_{n\to+\infty}\|\nabla v_n\|_2^2\geq 0,
\end{split}
\end{equation*}
which implies that $\limsup_{n\to+\infty}\|\nabla v_n\|_2=0$, that is, $u_n\to \bar{u}$ in $H^1(\mathbb{R}^N)$. Then $E(\bar{u})= \lim_{n\to+\infty} E(u_n)= m_{R_0}(\rho)$ and so, from Remark \ref{rmk3.1}, $\|\nabla \bar{u}\|_2 < R_0$. This proves (1).

Now we prove (2). Let $\bar{u}\in \mathcal{U}_{R_0}(\rho)$ such that $E(\bar{u})=m_{R_0}(\rho)$. Assume by contradiction that $a:=\|\bar{u}\|_2<\rho$. By Remark \ref{rmk3.2} (2), $m_{R_0}(a)>m_{R_0}(\rho)$.  Since $\bar{u}\in \mathcal{U}_{R_0}(a)$, we have $m_{R_0}(a)\leq E(\bar{u})=m_{R_0}(\rho)$. That is an contradiction. So we must have $\bar{u}\in \mathcal{S}_\rho$.

There exists $\lambda_{\bar{u}}\in \mathbb{R}$ such that
\begin{equation*}
-\Delta \bar{u}+\lambda_{\bar{u}} \bar{u}=
(I_{\alpha}\ast F(\bar{u}))f(\bar{u}),\quad \text{in}\  \mathbb{R}^N.
\end{equation*}
If $\lambda_{\bar{u}}\leq 0$, then by the Poho\v{z}aev identity (see \cite{Li-Ma 2020}), we obtain that
\begin{equation*}
\begin{split}
\frac{N-2}{2}\int_{\mathbb{R}^N}|\nabla \bar{u}|^2dx&\geq \frac{N-2}{2}\int_{\mathbb{R}^N}|\nabla \bar{u}|^2dx+\frac{N}{2}\lambda_{\bar{u}}\int_{\mathbb{R}^N}| \bar{u}|^2dx\\
&=\frac{N+\alpha}{2}\int_{\mathbb{R}^N}(I_\alpha\ast F(\bar{u}))F(\bar{u})dx.
\end{split}
\end{equation*}
Hence,
\begin{equation*}
\begin{split}
E(\bar{u})&=\frac{1}{2}\int_{\mathbb{R}^N}|\nabla \bar{u}|^2dx-\frac{1}{2}\int_{\mathbb{R}^N}(I_\alpha\ast F(\bar{u}))F(\bar{u})dx\\
&\geq \frac{1}{2}\int_{\mathbb{R}^N}|\nabla \bar{u}|^2dx-\frac{N-2}{N+\alpha}\frac{1}{2}\int_{\mathbb{R}^N}|\nabla \bar{u}|^2dx\\
&=\frac{1}{2}\frac{2+\alpha}{N+\alpha}\int_{\mathbb{R}^N}|\nabla \bar{u}|^2dx\geq 0,
\end{split}
\end{equation*}
which contradicts $E(\bar{u})=m_{R_0}(\rho)<-\frac{1}{2}b^2S_2^{-\frac{N+\alpha}{N}}\rho^{2\frac{N+\alpha}{N}}$. Thus $\lambda_{\bar{u}}>0$.
\end{proof}

\textbf{Proof of Theorem \ref{thm1.1}}. It is a direct result of  Theorem \ref{thm3.1}.

\textbf{Proof of Theorem \ref{thm1.2}}. We shall prove that
\begin{equation}\label{e3.12}
m_{R_0}(\rho)=\inf\{E(u):u\in \mathcal{S}_{\rho}\ \text{and}\ E|_{\mathcal{S}_{\rho}}'(u)=0\}.
\end{equation}
Then Theorem \ref{thm3.1} will yield that such an infimum is actually a minimum and  Theorem \ref{thm1.2} follows.

Now, we prove (\ref{e3.12}).  Since, from Theorem \ref{thm3.1}, there exists $\bar{u}\in \mathcal{S}_{\rho}$ with $\|\nabla  \bar{u}\|_2<R_0$ such that $E(\bar{u})=m_{R_0}(\rho)$ and $\bar{u}\in \{u:u\in \mathcal{S}_{\rho}\ \text{and}\ E|_{\mathcal{S}_{\rho}}'(u)=0\}$. So we have
\begin{equation*}
m_{R_0}(\rho)\geq \inf\{E(u):u\in \mathcal{S}_{\rho}\ \text{and}\ E|_{\mathcal{S}_{\rho}}'(u)=0\}.
\end{equation*}
Assume by contradiction that $m_{R_0}(\rho)>\inf\{E(u):u\in \mathcal{S}_{\rho}\ \text{and}\ E|_{\mathcal{S}_{\rho}}'(u)=0\}$, then there exists $u\in \mathcal{S}_{\rho}$ such that $E|_{\mathcal{S}_{\rho}}'(u)=0$ and $E(u)<m_{R_0}(\rho)$. From the definition of $m_{R_0}(\rho)$,  there holds $\|\nabla u\|_2\geq R_0$. In fact, we know from Lemma \ref{lem3.1}, Lemma \ref{lem3.2} (g2) and Lemma \ref{lem3.3} that $\|\nabla u\|_2> R_1$.

Consider the function $\varphi:(0,+\infty)\to \mathbb{R}$, $\varphi(\tau):=E(u_\tau)$. By the Nehari identity and the Poho\v{z}aev identity, every critical  point of $E|_{\mathcal{S}_{\rho}}$ belongs to
\begin{equation*}
\mathcal{M}:=\left\{u\in \mathcal{S}_{\rho}:\|\nabla u\|_2^2=\frac{N}{2}\int_{\mathbb{R}^N}(I_\alpha\ast F(u))(f(u)u-\frac{N+\alpha}{N}F(u))dx\right\}.
\end{equation*}
Thus, $\varphi'(1)=0$. By Lemma \ref{lem3.1},
\begin{equation}\label{e3.15}
\varphi(\tau)=E(u_\tau)\geq -\frac{1}{2}b^2S_2^{-\frac{N+\alpha}{N}}\rho^{2\frac{N+\alpha}{N}}+h(\rho,\|\nabla u_\tau\|_2)\|\nabla u_\tau\|_2^2.
\end{equation}
By  Lemma  \ref{lem3.2} (g2),  and noting that $\|\nabla u_\tau\|_2=\tau\|\nabla u\|_2$, we  have
\begin{equation}\label{e2.3}
h(\rho,\|\nabla u_\tau\|_2)>0\ \text{for}\ \tau\in({R_0}/{\|\nabla u\|_2},{R_1}/{\|\nabla u\|_2}).
\end{equation}
Combining (\ref{e3.15}) and (\ref{e2.3}), we obtain that
\begin{equation*}
\varphi(\tau)> -\frac{1}{2}b^2S_2^{-\frac{N+\alpha}{N}}\rho^{2\frac{N+\alpha}{N}},\ \text{for}\ \tau\in({R_0}/{\|\nabla u\|_2},{R_1}/{\|\nabla u\|_2}).
\end{equation*}
By the choice of $u$ and  Lemma \ref{lem3.3},
\begin{equation*}
\varphi(1)=E(u)<m_{R_0}(\rho)<-\frac{1}{2}b^2S_2^{-\frac{N+\alpha}{N}}\rho^{2\frac{N+\alpha}{N}}.
\end{equation*}
Thus,
\begin{equation*}
\varphi(\tau)>\varphi(1),\ \text{for}\ \tau\in({R_0}/{\|\nabla u\|_2},{R_1}/{\|\nabla u\|_2}).
\end{equation*}
By the expression of $E(u_\tau)$:
\begin{equation*}
\begin{split}
E(u_\tau)&=-\frac{1}{2}b^2\int_{\mathbb{R}^N}(I_\alpha\ast|u|^{\frac{N+\alpha}{N}})|u|^{\frac{N+\alpha}{N}}dx\\
&\qquad +\frac{1}{2}\tau^2\left\{\|\nabla u\|_2^2-\frac{1}{2}\frac{1}{\tau^{N+\alpha+2}}\int_{\mathbb{R}^N}(I_\alpha\ast G(\tau^{\frac{N}{2}}u))G(\tau^{\frac{N}{2}}u)dx\right.\\
&\qquad\qquad\qquad\qquad\left.-b\frac{1}{\tau^{(N+\alpha)/2+2}}\int_{\mathbb{R}^N}(I_\alpha\ast |u|^{\frac{N+\alpha}{N}})G(\tau^{\frac{N}{2}}u)dx
\right\},
\end{split}
\end{equation*}
and arguing as the proof of Lemma \ref{lem3.3}, we know
\begin{equation*}
\varphi(\tau)=E(u_\tau)<-\frac{1}{2}b^2\int_{\mathbb{R}^N}(I_\alpha\ast|u|^{\frac{N+\alpha}{N}})|u|^{\frac{N+\alpha}{N}}dx\quad \text{for}\ \tau\ll1.
\end{equation*}
By the following expression of $E(u_\tau)$ and (\ref{e2.3}),  similarly to (\ref{e3.11}) and (\ref{e3.2}),
\begin{equation*}
\begin{split}
\varphi(\tau)=E(u_\tau)&=\frac{1}{2}\int_{\mathbb{R}^N}|\nabla u_\tau|^2dx\\
&\qquad-\frac{1}{2}\int_{\mathbb{R}^N}[I_\alpha\ast(b|u_\tau|^{\frac{N+\alpha}{N}}+G(u_\tau))][b|u_\tau|^{\frac{N+\alpha}{N}}+G(u_\tau)]dx\\
&\geq -\frac{1}{2}b^2\int_{\mathbb{R}^N}(I_\alpha\ast|u_\tau|^{\frac{N+\alpha}{N}})|u_\tau|^{\frac{N+\alpha}{N}}dx+h(\rho,\|\nabla u_\tau\|_2)\|\nabla u_\tau\|_2^2\\
&> -\frac{1}{2}b^2\int_{\mathbb{R}^N}(I_\alpha\ast|u|^{\frac{N+\alpha}{N}})|u|^{\frac{N+\alpha}{N}}dx
\end{split}
\end{equation*}
for $\tau\in({R_0}/{\|\nabla u\|_2},{R_1}/{\|\nabla u\|_2})$, which is lager than $\varphi(\tau)$ with $\tau\ll 1$.
Noting that ${R_1}/{\|\nabla u\|_2}<1$, we know from above $\varphi$ has a local maximum point $\tau_u\in (0,1)$. From $(G_4)$, $\varphi'(\tau)<0$ for $\tau\in(\tau_u,+\infty)$, in contradiction with $\varphi'(1)=0$. Thus we must have (\ref{e3.12}) and  the proof is complete.

\bigskip

\textbf{Acknowledgements.}  This work is supported by the National
Natural Science Foundation of China (No. 12001403).



\end{document}